\newtheorem{theorem}{Theorem} 
\newtheorem{lemma}{Lemma}
\newtheorem{corollary}{Corollary}
\newtheorem{claim}{Claim}
\newtheorem{remark}{Remark}
\begin{document} 

\title{The Fully Nonlinear Stochastic Impulse Control Problem}

\author{Rohit Jain}

\begin{abstract}
In this paper, motivated by a problem in stochastic impulse control theory, we aim to study solutions to a free boundary problem of obstacle-type. We obtain sharp estimates for the solution using nonlinear tools which are independent of the modulus of semi-convexity of the obstacle. This allows us to state a general estimate for solutions to free boundary problems of obstacle-type admitting obstacles with a general modulus of semi-convexity. We provide two applications of our result. We consider penalized fully nonlinear obstacle problems and provide sharp decay estimates for H\"{o}lder norms and we prove sharp estimates for the solution to a fully nonlinear stochastic impulse control problem.
\end{abstract}

\maketitle

\section{Introduction}

Stochastic impulse control problems (\cite{BL82}, \cite{L73}, \cite{M76}, \cite{F79}) are control problems that fall between classical diffusion control and optimal stopping problems. In such problems the controller is allowed to instantaneously move the state process by a certain amount every time the state exits the non-intervention region. This allows for the controlled process to have sample paths with jumps. There is an enormous literature studying stochastic impulse control models and many of these models have found a wide range of applications in electrical engineering, mechanical engineering, quantum engineering, robotics, image processing, and mathematical finance. A key operator in stochastic impulse control problems is the intervention operator 
\\
\begin{equation}
\textnormal{M}u(x) = \inf_{\xi \geq 0} (u(x+\xi) + 1).
\end{equation}
\\
The operator represents the value of the strategy that consists of taking the best immediate action in state $x$ and behaving optimally afterward. Since it is not always optimal to intervene, this leads to the quasi-variational inequality 
\\
\begin{equation}
u(x) \leq \textnormal{M}u(x) \; \; \forall x \in \mathbb{R}^{n}.
\end{equation}
\\
From the analytic perspective one obtains an obstacle problem where the obstacle depends implicitly and nonlocally on the solution. More precisely we can consider the classical stochastic impulse control problem
\\
\[
   \begin{cases}
         \Delta u(x) \geq f(x)& \forall x \in \Omega.\\
         u(x) \leq Mu(x) & \forall x \in \Omega.\\
         u = 0 & \forall x \in \partial \Omega.        
   \end{cases}
\]
\\
Here we let $\Omega \subset \mathbb{R}^{n}$ be a bounded domain with a $C^{2,\alpha}$ boundary $\partial \Omega$, $u \in W_{0}^{1,2}(\Omega)$, $f \in L^{\infty}(\Omega)$, $ f \geq 0$, and 
\begin{equation}
\textnormal{M}u(x) = \inf_{\stackrel{\xi \geq 0}{x + \xi \in \bar{\Omega}}}(u(x + \xi) + 1).
\end{equation} 
The assumption $f \geq 0$ implies that the solution $\bar{u}$ to the boundary value problem $\Delta \bar{u} = f$ in $\Omega$ with $\bar{u} \in H_{0}^{1}(\Omega)$ satisfies $ \bar{u} \geq 0$. This implies in particular that the set of solutions to $v \leq M\bar{u}$ with $v \in H_{0}^{1}(\Omega)$ is nonempty. This allows for an iterative procedure to prove existence and uniqueness of the solution (\cite{BL82}). We also point out that the sharp $C^{1,1}_{loc}$ estimate in the classical stochastic impulse control problem has been previously obtained (\cite{CF79a}, \cite{CF79b}). In this paper we consider a fully nonlinear problem. We let $F(D^{2}u)$ be a fully nonlinear uniformly elliptic operator i.e. $\lambda\|P\| \leq F(A+P) - F(P) \leq \Lambda\|P\| \; \forall P \geq 0$ and $A,P \in \mathbf{S}(n)$, where $\mathbf{S}(n)$ is the set of all $n \times n$ real symmetric matrices. We also assume that the operator is either convex or concave in the hessian variable. We define $\varphi_{u}(x)$ to be a semi-convex function with a general modulus of semi-convexity $\omega(r)$. We consider the following boundary value problem. 
\\
\begin{equation}
   \begin{cases}
        F(D^{2}u) \leq 0 & \forall x \in \Omega. \\
        u(x) \geq \varphi_{u}(x) & \forall x \in \Omega. \\    
        u = 0 & \forall x \in \partial \Omega. \\ 
  \end{cases}
\end{equation}
\\
In this work we are interested in proving sharp estimates for fully nonlinear obstacle problems admitting obstacles with a general modulus of semi-convexity. The following are our main results in this paper, 

\begin{theorem}  Consider the fully nonlinear obstacle problem with obstacle $\varphi_{u}$, admitting a modulus of semiconvexity, $\omega(r)$. Then the solution $u$ has a modulus of continuity $\omega(r)$ up to $C^{1,1}(\Omega)$.
\end{theorem}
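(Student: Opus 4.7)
The plan is to control the $C^{1,1}$-scale regularity of $u$ by bounding the second-order incremental quotient
\[
\Delta^2_y u(x) := u(x+y) + u(x-y) - 2 u(x),
\]
and showing $|\Delta^2_y u(x)| \leq C\,\omega(|y|)|y|^2$ for every $x$ compactly contained in $\Omega$ and every admissible increment $y$. This is the natural quantitative refinement of $C^{1,1}$ regularity and encodes the desired modulus $\omega$ at the second-derivative scale.

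First I would establish the lower bound on the contact set $\mathcal{C} := \{u = \varphi_u\}$. At any $x_0 \in \mathcal{C}$, the obstacle constraint $u \geq \varphi_u$ together with the $\omega$-semiconvexity of $\varphi_u$ gives directly
\[
\Delta^2_y u(x_0) \geq \Delta^2_y \varphi_u(x_0) \geq -2\,\omega(|y|)|y|^2.
\]
Next I would propagate this lower bound to the non-contact set $\{u > \varphi_u\}$, where $F(D^2 u) = 0$. By uniform ellipticity, the difference of two $F$-solutions satisfies a Pucci extremal inequality in the viscosity sense; applying this twice shows that $v := \Delta^2_y u$ is both a Pucci sub- and super-solution in the non-contact region. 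With boundary data $v \geq -2\omega(|y|)|y|^2$ coming from the previous step on $\mathcal{C}$ and from the $C^{2,\alpha}$ regularity of $\partial \Omega$ together with $u|_{\partial \Omega} = 0$ on the outer boundary, the Alexandrov--Bakelman--Pucci estimate yields $\Delta^2_y u(x) \geq -C\,\omega(|y|)|y|^2$ throughout $\Omega$.

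For the matching upper bound I would invoke the convexity or concavity of $F$. The one-sided lower bound just obtained constrains $D^2 u$ from below in a weak sense, and combining with $F(D^2 u) \leq 0$ and uniform ellipticity forces an upper bound of the same order. Concretely, one compares $u$ to quadratic barriers of size $\omega(|y|)|y|^2$ adapted to the direction $y$, using Evans--Krylov type arguments that exploit the structural assumption on $F$ to upgrade the one-sided estimate into the two-sided bound $|\Delta^2_y u(x)| \leq C\,\omega(|y|)|y|^2$.

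The main obstacle lies in this final step: passing from a one-sided bound to a sharp two-sided bound with the non-constant modulus $\omega$. In the classical $C^{1,1}$ setting ($\omega \equiv$ const) the scale invariance of both the equation and the bound renders this step essentially automatic. For a general modulus we lose this invariance and must argue scale-by-scale, matching the barriers to $\omega(r)$ at radius $r$; precisely this scale-adapted estimate, obtained with nonlinear tools independent of the particular form of $\omega$, is the key technical novelty announced in the abstract.
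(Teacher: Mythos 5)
Your overall plan—bound $\Delta^2_y u$ from below on the contact set via semiconvexity of $\varphi_u$, propagate, then close with convexity/concavity of $F$—is a reasonable instinct, but it diverges from the paper's argument and has two genuine gaps.

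First, the claim that $v := \Delta^2_y u$ is ``both a Pucci sub- and super-solution'' in the non-contact set is not correct as stated. For $\Delta^2_y u$ to satisfy any equation you need all three translates $x$, $x+y$, $x-y$ to lie in $\{u > \varphi_u\}$, which is not a condition you control; the increment $y$ will generically cross the free boundary. Even where all three points do lie in the non-contact region, convexity of $F$ gives only that $\tfrac12 u(\cdot+y)+\tfrac12 u(\cdot-y)$ is a supersolution, and hence $\Delta^2_y u$ is a supersolution—you do not get the subsolution property, so ABP on $v$ alone cannot give the two-sided bound.

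Second, the upper bound, which you correctly identify as the crux, is left entirely open, and that is precisely the part the paper handles by a different route. The paper never differentiates the equation nor considers incremental quotients of $u$. It works directly with $w = u - L_{x_0}$, where $L_{x_0}$ is a supporting plane of $\varphi_u$ at a contact point $x_0$, and exploits the following: semiconvexity makes $w + \omega(4\rho)$ nonnegative on $B_{4\rho}(x_0)$; $w(x_0) = 0$; and $w$ is a supersolution of $F(D^2 w) \leq 0$. Rescaling $v(y) = w(\rho y + x_0)/\omega(4\rho) + 1$ gives a nonnegative supersolution on $B_4$ with $v(0) = 1$, to which the interior Harnack inequality (in the non-contact ball around $y_1$) and the weak $L^\varepsilon$ estimate for nonnegative supersolutions apply. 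This pins down $v(y_1)$ by a universal constant without any need to upgrade a one-sided bound: the \emph{upper} bound on $w(x_1)$ comes out of Harnack and the weak $L^\varepsilon$ estimate, and the \emph{lower} bound is free from the obstacle constraint. The lemma for two contact points is then a contradiction argument via the same rescaling and half-ball construction, and the theorem is a case analysis splicing these estimates with interior regularity for $F(D^2 u) = 0$. You should study why the rescaled Harnack argument avoids the scale-invariance obstruction you flagged: because $\omega(4\rho)$ appears only as a normalizing constant, not inside the equation, the argument works scale by scale regardless of the form of $\omega$.
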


As an application we apply our result to obtain a sharp estimate for the solution to the following fully nonlinear stochastic impulse control problems,

\begin{theorem} Let $\Omega \subset \mathbb{R}^{n}$ be a bounded domain with a $C^{2,\alpha}$ boundary $\partial \Omega$. Define
\\
\begin{equation} 
\textnormal{M}u(x) = \varphi(x) + \inf_{\stackrel{\xi \geq 0}{x + \xi \in \bar{\Omega}}}(u(x + \xi)).
\end{equation}
\\
Here $\varphi(x)$ is $\omega(r)$ semiconcave, strictly positive, bounded, and decreasing in the positive cone $\xi \geq 0$. Consider the solution to the following fully nonlinear stochastic impulse control problem,
\\
\begin{equation}
   \begin{cases}
        F(D^{2}u) \geq 0 & \forall x \in \Omega.\\
        u(x) \leq \textnormal{M}u(x) & \forall x \in \Omega.\\
        u = 0 & \forall x \in \partial \Omega.        
  \end{cases}
\end{equation}
\\ 
Then, the solution $u$ has modulus of continuity $\omega(r)$ up to $C^{1,1}(\Omega)$.
\end{theorem}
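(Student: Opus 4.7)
The plan is to reduce Theorem~2 to Theorem~1 via the Bensoussan--Lions iterative scheme for quasi-variational inequalities. Setting $u^{0}$ to be the solution of the unconstrained Dirichlet problem $F(D^{2}u^{0})=0$, $u^{0}=0$ on $\partial\Omega$, I would define $u^{k+1}$ to be the unique solution of the upper-obstacle problem
\[
F(D^{2}u^{k+1})\geq 0,\quad u^{k+1}\leq \mathrm{M}u^{k}\text{ in }\Omega,\qquad u^{k+1}=0\text{ on }\partial\Omega.
\]
Once $u^{k}$ is fixed, the obstacle $\mathrm{M}u^{k}$ is no longer implicit, so each step is a genuine obstacle problem; after the standard sign flip $v=-u^{k+1}$ with dual operator $\tilde F(M):=-F(-M)$, which preserves ellipticity constants and swaps concavity with convexity, Theorem~1 applies directly to each iterate.

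The decisive ingredient is the induction claim: if $u^{k}$ is $\omega$-semi-concave, then so is $\mathrm{M}u^{k}$, which then forces $u^{k+1}$ to inherit modulus $\omega(r)$ up to $C^{1,1}$ via Theorem~1. Semi-concavity with modulus $\omega$ is preserved under translation, so each $x\mapsto u^{k}(x+\xi)$ is $\omega$-semi-concave, and a short convex-analysis argument (the defining inequality $\lambda f(x)+(1-\lambda)f(y)-f(\lambda x+(1-\lambda)y)\leq \lambda(1-\lambda)|x-y|\,\omega(|x-y|)$ passes through infima since the right-hand side does not depend on $f$) shows that any infimum of $\omega$-semi-concave functions is again $\omega$-semi-concave. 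Adding $\varphi$, which is $\omega$-semi-concave by hypothesis, yields that $\mathrm{M}u^{k}$ is $\omega$-semi-concave with a modulus depending only on $\omega$ and on the data of $\varphi$, and crucially \emph{uniformly} in $k$.

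With this uniform estimate in hand, it remains to verify that $u^{k}$ converges (monotonically, by a standard comparison argument using that $\mathrm{M}$ is monotone in $u^{k}$) to the unique solution $u$ of the QVI, and to pass the $C^{1,1}$-with-modulus-$\omega$ bound to the limit via Arzel\`a--Ascoli and the stability of viscosity sub/supersolutions. The strict positivity and monotonicity of $\varphi$ in the positive cone, together with $u^{k}=0$ on $\partial\Omega$, make the constraint $u^{k+1}\leq \mathrm{M}u^{k}$ compatible with the boundary data so the scheme is well-posed. The step I expect to be most delicate is the preservation of semi-concavity under the \emph{constrained} infimum $\inf_{\xi\geq 0,\,x+\xi\in\bar\Omega}u^{k}(x+\xi)$: the feasibility set varies with $x$, and the natural convex-combination $\xi=\lambda\xi_{x}+(1-\lambda)\xi_{y}$ only keeps $\lambda x+(1-\lambda)y+\xi$ inside $\bar\Omega$ when $\Omega$ is convex (or at least star-shaped with respect to the positive cone). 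Either one imposes such a mild geometric hypothesis on $\Omega$ -- standard in the impulse control literature -- or one localizes the estimate and argues that bad configurations concentrate near $\partial\Omega$, where $u=0$ trivializes the infimum. Reconciling the boundary geometry with the claimed interior estimate is, in my view, the genuine obstacle in the argument; the semi-concavity propagation itself is soft.
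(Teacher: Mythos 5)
Your proposal takes a genuinely different route from the paper: you reduce to Theorem~1 by propagating semi-concavity through the Bensoussan--Lions iteration, whereas the paper argues directly at the fixed point $u$ and only invokes the iteration for existence and continuity. The gap is in the claim that $\mathrm{M}u^{k}$ is $\omega$-semi-concave \emph{uniformly in $k$}. This does not follow from anything you wrote, and tracking constants shows it fails in general. If $u^{k}$ has modulus-$\omega$ constant $M_{k}$ (in the $\omega(r)=r^{2}$ case, a $C^{1,1}$ semi-norm), then every translate $u^{k}(\cdot+\xi)$ carries exactly that constant, so $\inf_{\xi}u^{k}(\cdot+\xi)$ is semi-concave with constant $M_{k}$ (not with the constant of $\varphi$), and $\mathrm{M}u^{k}$ is semi-concave with constant $C_{\varphi}+M_{k}$. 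Feeding this into Lemma~6 --- whose proof rescales by the obstacle's semi-convexity constant and whose weak Harnack constant is strictly greater than $1$ --- yields $M_{k+1}=K(n)\bigl(C_{\varphi}+M_{k}\bigr)$ with $K(n)>1$, so $M_{k}\to\infty$ geometrically. The scheme does not self-improve, and the Arzel\`a--Ascoli limit step has nothing uniform to stand on.

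The paper's proof of Theorem~3 breaks exactly this circularity. Claims~2--5 of Section~4 show that the argmin set $\Sigma_{x_{0}}$ of the implicit infimum lies uniformly inside the non-contact region $\{u<\mathrm{M}u\}$ and away from $\partial\Omega$. In the resulting tube around the argmin, $u$ solves $F(D^{2}u)=0$ and is $C^{2,\alpha}$ by Evans--Krylov interior estimates, with a constant depending only on $\|u\|_{\infty}$ and ellipticity and \emph{not} on any obstacle semi-concavity constant. The $c|h|^{2}$ bound on the second differences of $\inf_{\xi}u(\cdot+\xi)$ therefore comes from interior regularity in the free region, not from an application of Theorem~1, and the only term of genuine size $\omega$ is the one contributed by $\varphi$ itself. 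That structural fact --- the infimum is always attained where $u$ is smooth, with a uniform margin --- is the content that your proposal is missing and would need to supply. Your secondary worry about the constrained infimum and the geometry of $\Omega$ is well spotted but is dissolved by the same localization: Claims~3 and~4 replace the $x$-dependent feasibility set $\{\xi\geq 0:\,x+\xi\in\bar\Omega\}$ by a fixed interior neighbourhood, so no convexity or star-shapedness of $\Omega$ is needed.
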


We remark that as a corollary of this theorem we recover the sharp $C^{1,1}$ estimate for the classical stochastic impulse control problem. 
\\
\\
We proceed in stages to prove the stated theorems. The main point of interest in the first theorem is to improve the modulus of continuity for the obstacle $\varphi_{u}$ on the contact set $\{u = \varphi_{u}\}$. In particular the goal is to obtain a uniform modulus of continuity $\omega(r)$ for $\varphi_{u}$ which we can then propogate to the solution $u$. The second theorem follows from the first theorem once we establish semiconcavity estimates for the nonlocal obstacle $\textnormal{M}u(x)$. Moreover we can extend the free boundary regularity from the classical implicit constraint problem as considered in \cite{J15C} under the assumption that the data is analytic and $\varphi(x) = 1$. Finally as an application of the previous results we consider a singularly perturbed fully nonlinear obstacle problem and show optimal decay rates for H\"{o}lder norm estimates. 
\\
\\
\emph{Acknowledgements} I would like to express my sincerest gratitude and deepest apprecation to my thesis advisors Professor Luis A. Caffarelli and Professor Alessio Figalli. It has been a truly rewarding experience learning from them and having their guidance. 

\section{Lipschitz Estimates for the Solution}

To obtain the optimal estimate we first prove initial regularity estimates which we hope to extend. We fix $f =0$. All the proofs may be modified for a nonzero sufficiently regular $f$. We begin by first proving that solutions are indeed continuous. 

\begin{lemma} Let $u$ be a solution to (4) with semiconvex obstacle $\varphi_{u}$. Then $u \in C(\Omega)$. 
\end{lemma}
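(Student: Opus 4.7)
The plan is to establish continuity by combining the immediate lower bound $u \geq \varphi_u$ with an approximation argument that realizes $u$ as a uniform limit of continuous functions.

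First I would recall that any semiconvex function on $\Omega$ is automatically continuous, and in fact locally Lipschitz on interior compact sets. Since the obstacle condition $u \geq \varphi_u$ holds pointwise, this immediately yields $\liminf_{x \to x_0} u(x) \geq \varphi_u(x_0)$, which is exactly $u(x_0)$ at contact points. Away from the contact set, where $u > \varphi_u$, the solution satisfies $F(D^2 u) = 0$ in the viscosity sense, and the interior regularity theory for fully nonlinear uniformly elliptic equations (using the convexity/concavity assumption on $F$) already produces continuity there. What remains is to control $u$ uniformly across the free boundary $\partial\{u > \varphi_u\}$.

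For this step I would regularize the obstacle by inf-convolution (or a standard mollification), producing a family of smooth obstacles $\varphi_u^\epsilon$ with $\varphi_u^\epsilon \to \varphi_u$ uniformly on compact subsets of $\Omega$. The obstacle problem associated to $\varphi_u^\epsilon$ admits a continuous (indeed $C^{1,\alpha}$) viscosity solution $u^\epsilon$ by the standard theory for fully nonlinear obstacle problems with regular obstacles. I would then establish the $L^\infty$ stability bound $\|u^\epsilon - u\|_{L^\infty(\Omega)} \leq \|\varphi_u^\epsilon - \varphi_u\|_{L^\infty(\Omega)}$ via the minimality characterization of the obstacle solution: for any constant $c \geq \|\varphi_u^\epsilon - \varphi_u\|_{L^\infty(\Omega)}$, the shifted function $u^\epsilon + c$ is a supersolution of $F(D^2 v) \leq 0$ that lies above $\varphi_u$ with nonnegative boundary values and so must dominate $u$; interchanging the roles of $u$ and $u^\epsilon$ gives the matching bound. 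Consequently $u$ is the uniform limit of the continuous functions $u^\epsilon$ and therefore lies in $C(\Omega)$.

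The main technical obstacle will be a careful verification of this $L^\infty$ stability bound in the fully nonlinear viscosity setting, i.e.\ checking that the shift-and-compare step is compatible with the viscosity formulation of the obstacle problem and with the comparison principle for super-solutions of $F(D^2 v) \leq 0$ lying above a continuous obstacle. Once this stability is in place, the conclusion is a routine approximation argument and the proof is complete.
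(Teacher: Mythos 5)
Your route is genuinely different from the paper's, and it has a gap that is not merely technical bookkeeping. The paper reduces Lemma~1 to Lemma~2, an Evans-type propagation result: if $u$ is continuous on the contact set $\{u=\varphi_u\}$, then it is continuous throughout $\Omega$. Since $\varphi_u$ is semiconvex (hence continuous) and $u=\varphi_u$ there, the reduction is immediate; Lemma~2 is then proved directly by a Harnack-inequality contradiction argument near a putative discontinuity point of the free boundary, using only that $u$ is a lower-semicontinuous supersolution solving the equation in $\{u>\varphi_u\}$. This is entirely self-contained and makes no use of uniqueness, minimality, or approximation of the obstacle.

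Your argument instead hinges on the $L^\infty$ stability bound $\|u^\epsilon - u\|_{L^\infty} \leq \|\varphi_u^\epsilon - \varphi_u\|_{L^\infty}$, and this is where the gap lies. To run the ``shift and compare'' step you invoke ``the minimality characterization of the obstacle solution,'' but nothing in the formulation (4) (a system of inequalities together with the complementarity condition $F(D^2u)=0$ in $\{u>\varphi_u\}$) entitles you to that characterization for the given $u$. Deriving it requires a comparison principle between $u$ and an arbitrary continuous supersolution lying above $\varphi_u$, and that in turn needs $u$ to be upper semicontinuous on $\overline{\{u>\varphi_u\}}$ so the viscosity comparison applies up to the free boundary; at the stage of Lemma~1, $u$ is only known to be lower semicontinuous (precisely the hypothesis the paper actually uses). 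In other words, your argument quietly assumes a regularity or uniqueness property of $u$ that is part of what Lemma~1 is meant to establish, whereas the paper's Harnack argument works directly with an LSC supersolution. A secondary, repairable issue: you obtain $\varphi_u^\epsilon \to \varphi_u$ uniformly only on compact subsets of $\Omega$, yet then use $\|\varphi_u^\epsilon - \varphi_u\|_{L^\infty(\Omega)}$ in the shift constant $c$; you would need to localize the comparison to compactly contained subdomains with matching boundary data (which is fine for the interior conclusion $u\in C(\Omega)$, but must be spelled out). If you wish to salvage the approximation route, you should either establish the minimality characterization from first principles under only LSC hypotheses, or replace the whole mechanism by the Harnack argument, which is both more elementary and strictly weaker in its assumptions on $u$.
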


The lemma follows from a result due to G.C. Evans.

\begin{lemma} If $u$ is continuous in $\{ u = \varphi_{u} \}$, then $u$ is continuous in $\Omega$.
\end{lemma}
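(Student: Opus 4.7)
The plan is to decompose $\Omega$ according to the contact structure and verify continuity point by point. Write $\Omega = N \cup C$ where $N = \{u > \varphi_u\}$ is the (open) non-contact set and $C = \{u = \varphi_u\}$ is the (relatively closed) contact set. Every point of $\Omega$ lies either in $N$, in the interior of $C$, or on the free boundary $\Gamma := \partial N \cap \Omega \subset C$. The plan is to check continuity in each of the three cases separately.

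First I would handle the bulk. In $N$, the obstacle constraint is inactive, so $u$ satisfies the homogeneous equation $F(D^{2} u) = 0$ in the viscosity sense; by Krylov--Safonov / Caffarelli interior regularity for uniformly elliptic fully nonlinear operators, $u \in C^{\alpha}_{\mathrm{loc}}(N)$, in particular $u \in C(N)$. In the interior of $C$, we have $u \equiv \varphi_u$ locally, and semi-convexity gives local Lipschitz continuity of $\varphi_u$; so $u$ is continuous at such points. The hypothesis of the lemma takes care of continuity of $u$ at every point of $C$, including those on $\Gamma$.

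The only work left is continuity at a point $x_0 \in \Gamma$ when approached from $N$. Note $u(x_0) = \varphi_u(x_0)$ by the assumed continuity of $u$ on $C$ together with $x_0 \in \overline{C}$. For any sequence $\{x_k\} \subset N$ with $x_k \to x_0$, the inequality $u \geq \varphi_u$ and the continuity of $\varphi_u$ yield $\liminf u(x_k) \geq \varphi_u(x_0) = u(x_0)$. The matching upper bound $\limsup u(x_k) \leq u(x_0)$ is the crux: I would obtain it by a comparison argument on a small ball $B_r(x_0)$, building a local supersolution barrier $w$ of $F(D^{2} w) \leq 0$ that satisfies $w \geq \varphi_u$ in $B_r(x_0)$ and $w(x_0) = \varphi_u(x_0)$, and then using that $u$ is the least viscosity supersolution above $\varphi_u$ with zero Dirichlet data to conclude $u \leq w$ locally. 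Letting $r \to 0$ gives $\limsup u(x_k) \leq \varphi_u(x_0)$.

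The main obstacle will be the production of this barrier: one needs to absorb the error coming from the semi-convexity modulus of $\varphi_u$ into a function with sufficiently negative second-order jet to qualify as a supersolution of $F$. A more abstract alternative, and apparently the route suggested by the paper, is to invoke the classical continuity principle of G.C. Evans from potential theory: a potential (in our setting, the obstacle-problem solution) is continuous on the whole domain provided it is continuous on the support of the representing measure (here, the contact set $C$). In either formulation the fully nonlinear nature is not an essential obstruction, since the comparison principle and Perron-type characterization of $u$ are available in the viscosity framework.
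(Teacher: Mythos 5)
Your decomposition of $\Omega$ into the non-contact set $N$, the interior of the contact set, and the free boundary $\Gamma$ is the right way to begin, and the easy parts (interior regularity in $N$, local Lipschitz continuity of $\varphi_u$ from semi-convexity, the $\liminf$ inequality from $u\geq\varphi_u$ and continuity of $\varphi_u$) are all correct. But the proposal leaves the crucial step --- the $\limsup$ bound at a free boundary point --- as an unbuilt ``barrier,'' and this is not merely a technicality you could be expected to fill in. A supersolution $w$ with $F(D^2w)\leq 0$ that lies above $\varphi_u$ in $B_r(x_0)$ and agrees with it at $x_0$ must have a second-order jet at $x_0$ compatible with an interior minimum of $w-\varphi_u$, which pushes $D^2w(x_0)$ in the \emph{positive}-definite direction and hence pushes $F(D^2w)(x_0)$ toward $\geq 0$, not $\leq 0$. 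There is a genuine sign obstruction here, not just bookkeeping. Moreover, the comparison you invoke (``$u$ is the least supersolution above $\varphi_u$ with zero Dirichlet data, so $u\leq w$ locally'') is a \emph{global} minimality statement; to localize it you would need $w\geq u$ on $\partial B_r(x_0)$, which is precisely the kind of a priori control on $u$ near $x_0$ that the lemma is trying to establish. So the argument is circular at the point where it needs to close.

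The paper's proof avoids barriers entirely and instead runs a contradiction argument through the Harnack machinery, and it is worth seeing why that works where your approach stalls. Suppose $x_k\to x_0\in\Gamma$, $x_k\in N$, and $u(x_k)\to\mu>u(x_0)=0$. Since $u$ is a supersolution it is lower semicontinuous, so $u\geq-\delta$ near $x_0$ for any $\delta\ll\mu$. Let $r_k=\operatorname{dist}(x_k,\{u=\varphi_u\})$, so $B_{r_k}(x_k)\subset N$ and $u+\delta\geq 0$ is a nonnegative solution there; the interior Harnack inequality then gives $\inf_{B_{r_k/2}(x_k)}u\geq C_0\mu$. Taking $y_k$ to be the nearest contact point to $x_k$ (so $|y_k-x_k|=r_k$ and $y_k\to x_0$), the \emph{weak} Harnack inequality for the supersolution $u+\delta$ in $B_{4r_k}(y_k)$ transfers this lower bound to the contact point: $u(y_k)\geq C_1\mu>0$. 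But $y_k\in\{u=\varphi_u\}$ and $u$ is continuous on the contact set by hypothesis, so $u(y_k)\to u(x_0)=0$, a contradiction. The key move --- which your comparison-principle strategy has no analogue of --- is using full Harnack inside the non-contact ball to spread the positivity and then weak Harnack centered at the \emph{contact} point $y_k$ to pull that positivity onto the set where the hypothesis applies. You should replace the barrier step by this two-Harnack argument.
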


\begin{proof}
The possibility of a discontinuity is limited to a point on the free boundary, $\partial \{u > \varphi_{u} \}$. Consider $x_{0} \in \{u = \varphi_{u} \}$ and without loss of generality assume $u(x_{0}) = 0$. Suppose by contradiction, that there exists a sequence of points $\{x_{k}\}$ with the following properties:
\\
1. $\{x_{k}\} \to x_{0}$. 
\\
2. $\forall k$, $x_{k} \in \{u > \varphi_{u} \}.$
\\
3. $\mu = \lim_{x_{k} \to x_{0}} u(x_{k}) > u(x_{0}) = 0.$
\\
\\
By lower semicontinuity of $u$, we know that $\forall \delta > 0$ there exists a neighborhood of $x_{0}$ such that $u \geq -\delta$ for $\delta << \mu$. We consider
$$r_{k} = \textnormal{dist} [ x_{k}, \{u = \varphi_{u} \} ].$$
For a large enough $k$ we can ensure that:
\\
1. $u(x) + \delta \geq 0 \; \; \textnormal{in} \; B_{r_{k}}(x_{k}).$
\\
2. $u(x_{k}) + \delta \geq \frac{\mu}{2}.$
\\ 
\\
Moreover we know that $u(x) + \delta$ satisfies the equation in $B_{r_{k}}(x_{k})$. By the Harnack Inequality we obtain,
$$\frac{\mu}{2} \leq u(x_{k}) + \delta \leq C \inf_{B_{\frac{r_{k}}{2}(x_{k})}} (u + \delta).$$
This implies for some $C_{0} > 0$ universal,
$$ \inf_{B_{\frac{r_{k}}{2}(x_{k})}} u \geq C_{0} \mu.$$ 

Since $u$ is also superharmonic, we know from the weak Harnack Inequality in $B_{4r_{k}}(y_{k})$ that,
\[
\begin{split}
u(y_{k}) & \geq c \left( \fint_{B_{2r_{k}}(y_{k})} u^{p} \right)^{\frac{1}{p}} \\
             & = \frac{c}{|B_{2r_{k}}|^{1/p}} \left( \int_{B_{2r_{k}}(y_{k}) \setminus B_{\frac{r_{k}}{2}}(x_{k})} u^{p} + \int_{B_{\frac{r_{k}}{2}}(x_{k})} u^{p} \right)^{\frac{1}{p}} \\
             & \geq \frac{c}{|B_{2r_{k}}|^{1/p}} \left(-(\delta)^{p} |B_{2r_{k}}| + (C_{0} \mu)^{p} |B_{\frac{r_{k}}{2}}| \right)^{\frac{1}{p}} \\
             & \geq C_{1} \mu \; \; \; \textnormal{for} \; C_{1} > 0.
\end{split}
\]

On the other hand $u(y_{k}) = \varphi_{u}(y_{k})$ and $y_{k} \to x_{0}$. This implies in particular that,
\\
1. $\varphi_{u}(y_{k}) \geq C_{1} \mu.$
\\
2. $\varphi(x_{0}) = u(x_{0}) = 0.$
\\
This is our desired contradiction. 
\end{proof} 

\begin{remark} We observe that the conditions on the obstacle may be relaxed in the proof of this lemma. In fact continuity of the obstacle is sufficient.
\end{remark}

A generic semiconvex function with a general modulus of semiconvexity is known to be Lipschitz in the interior. We extend the previous result to show that solutions to a fully nonlinear obstacle problem admitting obstacles with a Lipschitz modulus of continuity grow from the free boundary with a comparable rate. 

\begin{lemma} Let $u$ be a solution to (4) with semiconvex obstacle $\varphi_{u}$. Fix $0 \in \partial\{u > \varphi_{u}\}.$ Then 
$$\sup_{B_{r}(0)} u(x) \leq Cr.$$
\end{lemma}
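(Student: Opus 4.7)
The plan is to combine the local Lipschitz regularity of semiconvex functions with a translation-shift comparison for the fully nonlinear obstacle problem. Normalize so that $u(0) = \varphi_u(0) = 0$; since $\varphi_u$ is semiconvex with modulus $\omega$ and locally bounded, a standard consequence of semiconvexity yields a local Lipschitz constant $L$ for $\varphi_u$ on $B_{2r}(0)$, so $\varphi_u(x) \leq L|x|$ throughout $B_{2r}(0)$. At any contact point $x \in B_r(0) \cap \{u = \varphi_u\}$ the estimate is then immediate: $u(x) = \varphi_u(x) \leq Lr$. Also, the minimum principle applied to $F(D^2 u) \leq 0$ with zero boundary data gives $u \geq 0$ in $\Omega$.

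\textbf{Main step.} For non-contact points I would introduce the translated-and-shifted competitor $V_h(x) := u(x-h) + K|h|$ defined on $\Omega \cap (\Omega + h)$, for a constant $K$ to be fixed. Translation invariance of $F$ gives $F(D^2 V_h) \leq 0$, and the Lipschitz bound on $\varphi_u$ gives $V_h(x) \geq \varphi_u(x-h) + K|h| \geq \varphi_u(x)$ provided $K \geq L$; thus $V_h$ is an admissible supersolution for the obstacle problem in its HJB formulation $\min\{-F(D^2\cdot),\;\cdot - \varphi_u\} = 0$. To verify $V_h \geq u$ on the boundary of $\Omega \cap (\Omega + h)$: on $\partial\Omega \cap (\Omega + h)$ one has $u = 0 \leq K|h| \leq V_h$, while on $(\partial\Omega + h) \cap \Omega$ one has $V_h = K|h|$ and $u(x) \leq C_\partial\, \mathrm{dist}(x,\partial\Omega) \leq C_\partial |h|$ by the classical boundary Lipschitz estimate for $F$-supersolutions vanishing on the $C^{2,\alpha}$ boundary $\partial\Omega$. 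Choosing $K = L + C_\partial$ secures $V_h \geq u$ on this boundary, and viscosity comparison then yields $u \leq V_h$ throughout $\Omega \cap (\Omega + h)$. Specializing $h = x$ with $|x| \leq r$ and using $u(0) = 0$ gives $u(x) \leq (L+C_\partial)|x| \leq (L+C_\partial) r$, completing the proof with $C = L + C_\partial$.

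\textbf{Main obstacle.} The chief technical subtlety is the boundary mismatch created by translation: the shifted function $u(\cdot - h)$ does not vanish on $\partial\Omega$, so one must absorb a boundary error into the constant. This is handled by the classical boundary Lipschitz estimate for $F$-supersolutions on $C^{2,\alpha}$ domains, a standard ingredient of the Cabr\'e--Caffarelli theory. A secondary, purely technical point is extracting a uniform Lipschitz constant $L$ for $\varphi_u$ on $B_{2r}(0)$ from the semiconvexity modulus $\omega$; this is standard modulo a local $L^\infty$-bound on $\varphi_u$, which is available from the problem setup. Both ingredients in place, the lemma reduces to the translation comparison above.
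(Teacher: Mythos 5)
Your approach is genuinely different from the paper's. The paper proves this lemma by a purely local argument: letting $v = u - (\varphi_u(0) - \gamma(r))$, one notes $v\geq 0$ on $B_r(0)$ by the obstacle constraint, applies the weak Harnack inequality in a ball around the nearest free boundary point $y$ to a given non-contact point $x$, and the interior Harnack inequality in $B_\rho(x)$, to conclude $v(x)\leq C v(y) = C(\varphi_u(y)-\varphi_u(0)+\gamma(r)) \leq C\gamma(r)$. No boundary information about $u$ or $\Omega$ enters, and only the \emph{local} Lipschitz modulus $\gamma(r)$ of $\varphi_u$ near the fixed free boundary point is used. You instead propose a global translation comparison $u(x)\leq u(x-h)+K|h|$, which is a legitimate and well-known route to Lipschitz bounds for obstacle problems, but it is a global statement and that is exactly where the trouble lies here.

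Two gaps prevent your argument from closing as written. First, to certify that $V_h(x)=u(x-h)+K|h|$ is a supersolution of the HJB formulation you need $\varphi_u(x)-\varphi_u(x-h)\leq K|h|$ at every point where comparison is invoked (at least on the free boundary and the lateral boundary of $\Omega\cap(\Omega+h)$). But semiconvexity of $\varphi_u$ only delivers a Lipschitz constant that is local and may blow up as one approaches $\partial\Omega$; the constant $L$ you extract on $B_{2r}(0)$ cannot be used globally, so your choice $K=L+C_\partial$ is not justified, and indeed a uniform global Lipschitz bound on $\varphi_u$ is an additional hypothesis, not a consequence of the data. Second, the boundary estimate $u(x)\leq C_\partial\,\mathrm{dist}(x,\partial\Omega)$ does not follow from the supersolution inequality $F(D^2u)\leq 0$ together with $u=0$ on $\partial\Omega$; that inequality bounds $u$ from \emph{below} near the boundary, not from above. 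To bound $u$ from above one needs $u$ to be a subsolution of a bounded right-hand side, i.e.\ a lower bound $F(D^2u)\geq -C$, which in this problem must be extracted from the semiconvexity of $\varphi_u$ on the contact set together with ellipticity, and this is not addressed. Both gaps are precisely the boundary and global issues that the paper's local weak Harnack argument is designed to avoid, which is why the paper's proof of this lemma never touches $\partial\Omega$.
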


\begin{proof}
Let $\gamma(r)$ denote the Lipschitz modulus of continuity for the obstacle $\varphi_{u}$ in $B_{r}(0)$. The obstacle condition $u \geq \varphi_{u}$ implies in $B_{r}(0)$ that 
$$u \geq \varphi_{u}(0) - \gamma(r).$$

Define
$$v(x) = u - (\varphi_{u}(0) - \gamma(r)).$$

We note that $F(D^{2}v) = F(D^{2}u) \leq 0$ and $F(D^{2}v) = 0$ inside $\{u > \varphi_{u}\}.$ We consider $x \in B_{r/4}(0) \cap \{u > \varphi_{u}\}.$ Moreover we let $y$ be the closest free boundary point to $x$. Let $\rho$ be the distance of $x$ to its closest free boundary point $y$. From the Weak Harnack Inequality it follows,
$$v(y) \geq C \left(\fint_{B_{2\rho(y)}} v^{p} \right)^{1/p}.$$

By the positivity of $v$ and Harnack Inequality in $B_{\rho(x)}$,  it follows that the right hand side,
$$\geq C \left( \frac{B_{\rho}(x)}{B_{2 \rho(y)}} \fint_{B_{\rho(x)}} v^{p} \right)^{1/p} \geq Cv(x). $$

Recall that $|\varphi_{u}(0) - \varphi_{u}(y)| \leq \gamma(r).$ Hence,
$$0 \leq v(y) = \varphi_{u}(y) - \varphi_{u}(0) + \gamma(r) \leq 2\gamma(r).$$

Changing back to our solution $u$, we find,
$$0 \leq u(x) - (u(0) - \gamma(r)) \leq Cv(y) \leq C \gamma(r).$$

In particular,
$$u(x) - u(0) \leq C \gamma(r).$$
\end{proof}

\section{Optimal $C^{\omega(r)}$ Estimates for the Solution}

In the previous section we assumed that the obstacle had a uniform modulus of continutity. A priori for semi-concave functions you only know that the uniform modulus of continuity is Lipschitz. Our goal as in the classical case will be to study the interplay between the equation and the obstacle to improve regularity estimates for the obstacle on the contact set. We start this section by stating and proving a lemma in the particular case that our operator is the Laplacian. The motivating calculation will help us proceed to prove the desired estimate in the more general case. The content of the lemma says that for any given point $x_{1} \in \{u(x) > \varphi_{u}(x) \}$,  $\exists x_{0} \in \{u(x) =  \varphi_{u}(x)\}$ such that the solution grows at most by $\omega(2|x_{1} - x_{0}|)$ where $\omega(|x_{1} - x_{0}|)$ denotes the modulus of semiconvexity for the obstacle on the ball $B_{|x_{1} - x_{0}|}(x_{0})$. Since a lower estimate is available via the obstacle, what we aim to show is that around a fixed contact point the modulus of continuity of the solution is controlled by the modulus of semiconvexity of the obstacle.

\begin{lemma} Let  $\varphi_{u}(x)$ be a semiconvex function with general modulus of semiconvexity $\omega(r)$. Consider the following obstacle problem:
\begin{equation}
   \begin{cases}
        \Delta u \leq 0 & \forall x \in \Omega.\\
        u(x) \geq \varphi_{u}(x) & \forall x \in \Omega.\\
        u = 0 & \forall x \in \partial \Omega      
  \end{cases}
\end{equation}

Fix  $x\in \{u(x) > \varphi_{u}(x) \}$ and define $L_{x_{0}}(x) = \varphi_{u}(x_{0}) + \langle p, x-x_{0} \rangle$, the linear part of the obstacle at the point $x_{0}$.  Then $\exists x_{0} \in \{u(x) = \varphi_{u}(x)\}$ and $C(n) >0$  such that $u(x) - L_{x_{0}}(x) \leq C(n)\omega(2|x - x_{0}|)$. 
\end{lemma}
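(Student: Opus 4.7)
The plan is to exploit two mean-value identities on nested balls: a super-mean-value bound for the superharmonic $\tilde u:=u-L_{x_0}$ on a ball centered at a nearest contact point $x_0$, and the exact mean value property for the harmonic $\tilde u$ on a ball centered at $x_1$ sitting inside the non-coincidence set. The one-sided bound $\tilde u\ge -\omega$ produced by semiconvexity of $\varphi_u$ gets converted, through the integral identity $\int\tilde u\le 0$, into a matching upper bound on $\tilde u^{+}$, and harmonicity at $x_1$ then upgrades this average estimate to the desired pointwise one.

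Concretely, choose $x_0\in\{u=\varphi_u\}$ nearest to $x_1$ and set $d:=|x_1-x_0|$, so that $B_d(x_1)\subset\{u>\varphi_u\}\cap B_{2d}(x_0)$. Define $\tilde u(y):=u(y)-L_{x_0}(y)$; since $L_{x_0}$ is affine, $\Delta\tilde u=\Delta u\le 0$ in $\Omega$, $\Delta\tilde u=0$ in $B_d(x_1)$, and $\tilde u(x_0)=u(x_0)-\varphi_u(x_0)=0$. Semiconvexity of $\varphi_u$ at $x_0$ with tangent plane $L_{x_0}$ gives $\varphi_u(y)\ge L_{x_0}(y)-\omega(|y-x_0|)$, which combined with $u\ge\varphi_u$ and monotonicity of $\omega$ yields $\tilde u\ge -\omega(2d)$ on $B_{2d}(x_0)$, i.e.\ $\tilde u^{-}\le \omega(2d)$ there. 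The mean value inequality for the superharmonic $\tilde u$ on $B_{2d}(x_0)$ then gives
$$\fint_{B_{2d}(x_0)}\tilde u\,dy\;\le\;\tilde u(x_0)\;=\;0,$$
so $\int_{B_{2d}(x_0)}\tilde u^{+}\,dy\le\int_{B_{2d}(x_0)}\tilde u^{-}\,dy\le\omega(2d)\,|B_{2d}|$.

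To conclude, harmonicity of $\tilde u$ on $B_d(x_1)$ together with the inclusion $B_d(x_1)\subset B_{2d}(x_0)$ yields
$$\tilde u(x_1)\;=\;\fint_{B_d(x_1)}\tilde u\,dy\;\le\;\frac{1}{|B_d|}\int_{B_{2d}(x_0)}\tilde u^{+}\,dy\;\le\;2^{n}\,\omega(2d),$$
which is the claim with $C(n)=2^{n}$. There is no truly hard step in the Laplacian case; the only conceptual point to get right is the careful choice of $x_0$ as a nearest contact point so that $B_d(x_1)$ lies in the harmonic region while $B_{2d}(x_0)$ captures $x_1$ and provides the superharmonic mean-value control. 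I expect the real difficulty to appear when passing to the fully nonlinear setting later in the paper, where the two mean-value tools have to be replaced by the appropriate ABP/weak-Harnack estimates, and where the passage from an $L^{p}$-average bound on $\tilde u^{+}$ to a pointwise value at $x_1$ is no longer automatic.
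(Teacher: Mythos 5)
Your proof is correct and follows essentially the same route as the paper: choose $x_0$ as a nearest contact point, use the super-mean-value inequality for $w=u-L_{x_0}$ on $B_{2d}(x_0)$ together with the semiconvexity lower bound $w\ge -\omega$, and then upgrade to a pointwise bound at $x_1$ via the harmonic mean value property on $B_d(x_1)$. The only cosmetic difference is the bookkeeping: the paper splits $\int_{B_{2\rho}(x_0)}w$ into the contributions from $B_\rho(x_1)$ and its complement and applies the two mean-value facts directly, whereas you separate $w$ into $w^{+}$ and $w^{-}$ and pass through the intermediate estimate $\int w^{+}\le\int w^{-}$; both give the same universal constant up to a factor.
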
    

\begin{proof}
We fix  $x_{1} \in \{u(x) > \varphi_{u}(x) \}$. Let $x_{0}$ denote the closest point to $x_{1}$ in $\{u  = \varphi_{u} \}$. We denote this distance by $\rho = |x_{1} - x_{0}|$. Define $w(x) = u(x) - L_{x_{0}}(x)$. Using the mean value theorem for superharmonic functions in $B_{2\rho}(x_{0})$ we have,
\[
\begin{split}
 0 & = w(x_{0}) \geq \frac{1}{\alpha(n)2^{n}\rho^{n}} \int_{B_{2\rho}(x_{0})} w(y) \;dy\\
 & = K(n)\int_{B_{2\rho} (x_{0}) \smallsetminus B_{\rho} (x_{1})} w(y) \; dy \; + K(n) \int_{B_{\rho} (x_{1})} w(y) \; dy.
\end{split}
\]

Semiconvexity of $w(x)$ in $B_{2\rho}(x_{0})$ and an application of the mean value theorem for harmonic functions in $B_{\rho}(x_{1})$ implies,
\[
\begin{split}
 & \geq K(n) \int_{B_{2\rho} (x_{0}) \smallsetminus B_{\rho} (x_{1})} - \omega(2|y-x_{0}|) + C_{1}(n) w(x_{1})\\
 & \geq -\tilde{C}(n)\omega(2\rho) + C_{1}(n) w(x_{1}).
\end{split}
\]

In particular we obtain the desired bound,
$$w(x_{1}) \leq C(n) \omega(2\rho).$$
\end{proof} 

We now look to generalize the previous argument in the fully nonlinear setting. In the preceding proof the lower bound on the obstacle was transferred to the solution at the contact point. Moreover we were able to renormalize the solution by subtracting off a linear part. We will also need a generalization of the mean value theorem that was used to connect pointwise information with information about the measure $\Delta u$. 
\\
\\
We consider again (4). For clarity we set $\omega(r) = \bar{C}r^{2}$ for some positive constant $\bar{C} > 0$. The arguments presented below can be trivially modified for the general semiconvex modulus by an appropriate rescaling. We make a remark in this direction towards the end of this section.

\begin{lemma} Let $x_{1} \in \{u > \varphi_{u} \}$. Then $\exists x_{0} \in \{u = \varphi_{u} \}$ such that for $w(x) = u(x) - L_{x_{0}}(x)$, where $L_{x_{0}}(x) = \varphi_{u}(x_{0}) + \langle p, x-x_{0} \rangle$ denotes the linear part of the obstacle at the point $x_{0}$, and a universal constant $K(n) > 0$, 
$$w(x_{1}) \leq K(n) |x_{1} - x_{0}|^{2}.$$
\end{lemma}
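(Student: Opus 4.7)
The plan is to mimic the Laplacian argument from the previous lemma, replacing its two applications of the mean value property by their Krylov--Safonov counterparts. First I would let $x_0 \in \{u = \varphi_u\}$ be a contact point closest to $x_1$, set $\rho = |x_1 - x_0|$, pick $p$ in the subdifferential of $\varphi_u$ at $x_0$ (nonempty because $\varphi_u$ is semiconvex), and define $L_{x_0}(x) = \varphi_u(x_0) + \langle p, x - x_0\rangle$ and $w(x) = u(x) - L_{x_0}(x)$. Since $L_{x_0}$ is affine, $D^2 w = D^2 u$, so $w$ satisfies $F(D^2 w) \leq 0$ throughout $\Omega$ and $F(D^2 w) = 0$ inside $B_\rho(x_1) \subset \{u > \varphi_u\}$. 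Semiconvexity with modulus $\omega(r) = \bar{C} r^2$ gives $w(y) \geq \varphi_u(y) - L_{x_0}(y) \geq -\bar{C}|y - x_0|^2$ on any ball centered at $x_0$, while $w(x_0) = 0$.

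Next, I would normalize by setting $\tilde{w}(y) = w(y) + 16\bar{C}\rho^2$, so that $\tilde{w} \geq 0$ on $B_{4\rho}(x_0)$, $\tilde{w}(x_0) = 16\bar{C}\rho^2$, and the equations satisfied by $\tilde{w}$ are identical to those for $w$. Since $\tilde{w}$ is a nonnegative Pucci supersolution (because $M^-(D^2 \tilde{w}) \leq F(D^2 \tilde{w}) \leq 0$), the Krylov--Safonov weak Harnack inequality applied on $B_{4\rho}(x_0)$ yields, for some universal $p_0 > 0$,
\[
\left(\fint_{B_{2\rho}(x_0)} \tilde{w}^{p_0}\right)^{1/p_0} \leq C \inf_{B_{2\rho}(x_0)} \tilde{w} \leq C \tilde{w}(x_0) = 16 C \bar{C}\rho^2.
\]
This is the nonlinear substitute for the superharmonic mean value inequality used in the Laplacian case, and it controls the $L^{p_0}$ mass of $\tilde{w}$ on the entire ball $B_{2\rho}(x_0) \supset B_\rho(x_1)$.

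To transfer this integral control to a pointwise bound at $x_1$, I would apply the local maximum principle for Pucci subsolutions on $B_\rho(x_1)$, where $\tilde{w}$ is a nonnegative $F$-solution and in particular satisfies $M^+(D^2 \tilde{w}) \geq 0$. This gives
\[
\tilde{w}(x_1) \leq \sup_{B_{\rho/2}(x_1)} \tilde{w} \leq C' \left(\fint_{B_{\rho}(x_1)} \tilde{w}^{p_0}\right)^{1/p_0} \leq C'' \bar{C}\rho^2,
\]
where the last inequality uses the previous step together with $B_\rho(x_1) \subset B_{2\rho}(x_0)$ and $|B_{2\rho}|/|B_\rho| = 2^n$. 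Subtracting the constant shift then produces the claimed bound $w(x_1) \leq K(n)|x_1 - x_0|^2$ after absorbing constants.

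The main obstacle is conceptual rather than computational: in the Laplacian case both the ``outer'' averaging at $x_0$ and the ``inner'' averaging at $x_1$ were furnished by exact mean value formulas, which are not available for fully nonlinear operators. I plan to replace them respectively by the Krylov--Safonov weak Harnack inequality for Pucci supersolutions on the large ball around $x_0$ and the local maximum principle for Pucci subsolutions on the small ball around $x_1$. Once these tools are in place, the geometric split into an annular ``semiconvexity'' region and an inner ``equation'' region mirrors the Laplacian argument, and the convexity or concavity hypothesis on $F$ does not enter at this step.
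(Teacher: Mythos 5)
Your proof is correct, and it lives in the same toolbox as the paper's: renormalize around $x_0$ so that the shifted function is a nonnegative supersolution on $B_{4\rho}(x_0)$ and a genuine solution on the inner ball $B_\rho(x_1)$, then compare scales via Krylov--Safonov theory. The precise combination of tools you use is, however, slightly different from the paper's. The paper applies the interior Harnack inequality on $B_\rho(x_1)$ to write $v(y_1)\leq C\inf_{B_{\rho/2}(x_1)}v$, and then controls that infimum via the weak $L^\epsilon$ (measure) estimate for supersolutions on the larger ball centered at $x_0$. You instead use the weak Harnack inequality in its integral form on $B_{4\rho}(x_0)$ to bound $\bigl(\fint_{B_{2\rho}(x_0)}\tilde w^{p_0}\bigr)^{1/p_0}$ by $\tilde w(x_0)$, and then transfer to a pointwise bound at $x_1$ via the local maximum principle for subsolutions on $B_\rho(x_1)$. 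Your pairing of (weak Harnack for supersolutions) with (local maximum principle for subsolutions) is arguably the more faithful nonlinear analogue of the two mean value identities in Lemma~4 --- the superharmonic one-sided inequality becomes the weak Harnack bound, and the harmonic equality becomes the local maximum bound --- and it avoids having to locate a good point for the infimum via the measure estimate, making the geometry a touch cleaner. The paper's version is equivalent in strength, since interior Harnack and weak $L^\epsilon$ are themselves consequences of the same two Krylov--Safonov ingredients. One small point worth making explicit: the implication $M^-(D^2\tilde w)\leq F(D^2\tilde w)\leq 0$ (and likewise $M^+(D^2\tilde w)\geq 0$ where $F(D^2\tilde w)=0$) uses the normalization $F(0)=0$, which is standard and consistent with the paper's framework but should be stated.
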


\begin{proof}
Fix $x_{1} \in \{u > \varphi_{u} \}$. Let $x_{0}$ be the closest point to $x_{1}$ in $\{u = \varphi_{u} \}$.  We denote this distance by $\rho = |x_{1} - x_{0}|$. By the modulus of semiconvexity of the obstacle we know that $w(x) \geq -16\bar{C}\rho^{2}$ on $B_{4\rho}(x_{0})$. The idea of the proof is to zoom out to scale 1 and prove that the solution is bounded by a universal constant and then rescale back to obtain the desired bound. Consider the transformation $y = \frac{x-x_{0}}{\rho}$ and the scaled solution,
\begin{equation} 
v(y) = \frac{w(\rho y + x_{0})}{16\bar{C}\rho^{2}} + 1.
\end{equation}

We note that $v(y)$ is a non-negative supersolution on $B_{4}(0)$ with 
$$\inf_{B_{4}(0)} v(y) \leq 1.$$ 

Moreover $v(y)$ is a solution in $B_{1}(y_{1})$, since $x_{0}$ is the closest point in the contact set to $x_{1}$. By the interior Harnack Inequality,
$$v(y_{1}) \leq \sup_{B_{\frac{1}{2}}(y_{1})} v(y) \leq C \inf_{B_{\frac{1}{2}}(y_{1})} v(y).$$ 

We also know from the weak $L^{\epsilon}$ estimate for supersolutions that for universal constants $d$, $\epsilon$, 
$$|\{v \geq t\} \cap B_{2}(0)| \leq dt^{-\epsilon} \; \; \forall t >0.$$ 

We observe that $B_{\frac{1}{2}}(y_{1}) \subseteq B_{2}(0)$. Hence we can choose $t = t_{0}$ such that 
$$t_{0} = \left(\frac{\delta d}{|B_{\frac{1}{2}(y_{1})}|} \right)^{\frac{1}{\epsilon}},$$
for $\delta > 0$. It follows that,
$$|\{v \leq t\} \cap B_{\frac{1}{2}}(y_{1})| > \delta |B_{\frac{1}{2}}(y_{1})| > 0.$$

Hence there exists a universal constant $C$ such that,
$$v(y_{1}) \leq C.$$ 

This implies from (8), 
$$ \frac{w(\rho y_{1} + x_{0})}{4\bar{C}\rho^{2}} \leq C.$$ 

Rescaling back we find,
$$w(x_{1}) \leq K|x_{1} - x_{0}|^{2}.$$
\end{proof} 

We now prove a lemma that controls the oscillation of the solution between two arbitrary points on the contact set $\{u = \varphi_{u} \}$. As a corollary which we state after the proof, we improve the modulus of continuity for the obstacle $\varphi_{u}$ on the contact set $\{u = \varphi_{u}\}$.

\begin{lemma}Let $x_{1} \in \{u = \varphi_{u} \}$ and  $x_{0} \in \{u = \varphi_{u} \}$. Then for $w(x) = u(x) - L_{x_{0}}(x)$, where $L_{x_{0}}(x) = \varphi_{u}(x_{0}) + \langle p, x-x_{0} \rangle$ denotes the linear part of the obstacle at the point $x_{0}$, and $K(n) > 0$ a universal constant, 
$$w(x_{1}) \leq K(n) |x_{1} - x_{0}|^{2}.$$
\end{lemma}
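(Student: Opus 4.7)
The strategy is to reproduce the rescaling argument of the previous lemma, now allowing $x_1$ to lie on the coincidence set rather than strictly inside the non-contact region, and to recover the bound by approximation from the non-contact side. Set $\rho=|x_1-x_0|$ and define, as before,
\[
v(y)=\frac{w(\rho y+x_0)}{16\bar{C}\rho^2}+1
\]
on $B_4(0)$. Semiconvexity of $\varphi_u$ forces $v\ge0$, the identity $w(x_0)=0$ gives $v(0)=1$, and affineness of $L_{x_0}$ ensures that $v$ remains an $F$-supersolution on $B_4(0)$. In particular, the weak $L^\epsilon$ estimate $|\{v\ge t\}\cap B_2(0)|\le dt^{-\epsilon}$ carries over verbatim, and the claim reduces to a universal bound $v(y_1)\le C$ at the rescaled point $y_1=(x_1-x_0)/\rho$ on the unit sphere.

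The only step of the proof of Lemma 5 that breaks down is the use of the interior Harnack inequality on $B_1(y_1)$: since $y_1$ now sits in the rescaled coincidence set $\{v=\tilde\varphi\}$, $v$ need not be a solution on a full unit ball around $y_1$. My plan is to recover the bound by approximation. If $y_1$ lies in the closure of the rescaled non-contact set $\{v>\tilde\varphi\}$, I would pick $z_n\in\{v>\tilde\varphi\}$ with $z_n\to y_1$ and apply the previous lemma at the original-scale points $\rho z_n+x_0$ together with their respective closest contact points $\hat x_0^n$. Reading off the bounds and passing to the limit via the continuity of $v$ inherited from $u,\varphi_u\in C(\Omega)$, and using the monotonicity of the subgradient of the semiconvex $\varphi_u$ to control the discrepancy $|L_{\hat x_0^n}-L_{x_0}|$ by a quadratic in $|\hat x_0^n-x_0|$, produces the desired universal bound on $v(y_1)$. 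If instead $y_1$ lies in the interior of the rescaled coincidence set, then $v$ coincides with $\tilde\varphi$ on a neighborhood, so $\tilde\varphi$ itself is an $F$-supersolution there; combined with the semiconvexity lower bound $\tilde\varphi(y)\ge 1-|y|^2/16$, the interior estimates for $F$ give the pointwise bound on $\tilde\varphi(y_1)=v(y_1)$.

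Undoing the rescaling, $v(y_1)\le C$ translates into $w(x_1)\le K(n)|x_1-x_0|^2$, which is the assertion of the lemma. The main obstacle I anticipate is in the approximation step: the previous lemma, applied at the points $\rho z_n+x_0$, refers to the linear parts $L_{\hat x_0^n}$ of the obstacle at the varying closest contact points $\hat x_0^n$ rather than to the fixed $L_{x_0}$. The challenge is to keep the universal constant from deteriorating while $\hat x_0^n\to x_1$; monotonicity of subgradients for semiconvex functions is what allows this switch between linear parts to be absorbed into the final quadratic bound, so the universal character of the constant is preserved.
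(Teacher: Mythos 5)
Your proposal diverges from the paper's approach in a way that does not work. Both failure modes you consider fail to produce the needed \emph{upper} bound on $v(y_1)$.

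In the approximation step, Lemma 5 applied at $x_n=\rho z_n+x_0\to x_1$ gives $u(x_n)-L_{\hat x_0^n}(x_n)\le K|x_n-\hat x_0^n|^2\to 0$, so all the quantitative information from Lemma 5 vanishes in the limit. The entire weight then falls on the discrepancy $L_{\hat x_0^n}(x_n)-L_{x_0}(x_n)$, which you propose to control by a quadratic via ``monotonicity of the subgradient.'' But semiconvexity only furnishes a \emph{one-sided} monotonicity estimate: for $p\in D^+\varphi_u(x_0)$, $q\in D^+\varphi_u(y)$ one only gets $\langle p-q,\,x_0-y\rangle\ge -2\bar C|x_0-y|^2$, with no matching upper bound, because a semiconvex function may have arbitrarily large positive second differences. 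In fact, a direct computation shows $L_{\hat x_0^n}(x_n)-L_{x_0}(x_n)\to \varphi_u(x_1)-L_{x_0}(x_1)=w(x_1)$, which is precisely the quantity the lemma is supposed to bound; the argument is circular. In the interior-coincidence case, the fact that $v$ (equivalently $\tilde\varphi$) is a nonnegative $F$-supersolution near $y_1$ again only yields lower-bound information at nearby points (weak Harnack controls an integral by the infimum, not the pointwise value), and the semiconvexity bound $\tilde\varphi(y)\ge 1-|y|^2/16$ is also a lower bound. Supersolutions can be arbitrarily large at a single point, so neither ingredient gives $v(y_1)\le C$.

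The paper's proof is structurally different and does not split into cases according to the position of $y_1$. It argues by contradiction: assume $w(x_1)>K\rho^2$. Since $x_1$ is a contact point, $w(x_1)=\varphi_w(x_1)$ where $\varphi_w=\varphi_u-L_{x_0}$, and the obstacle condition $w\ge\varphi_w$ together with a semiconvexity inequality written with a superdifferential $d\in D^+\varphi_u(x_1)$ propagates the large value on the half-ball $\{x\in B_\rho(x_1):\langle p-d,x-x_1\rangle\le 0\}$, giving $w\ge \tfrac{K}{2}\rho^2$ there. After rescaling, this set of positive measure in $B_2(0)$ on which $v>K/(32\bar C)$ contradicts the weak Harnack/$L^\epsilon$ bound $\int_{B_4}v^{\epsilon/2}\le\bigl(Cv(0)\bigr)^{2/\epsilon}=C$ once $K$ is large. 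The half-ball construction, which converts the pointwise assumption at $x_1$ into a measure-theoretic statement that interacts with the supersolution property, is the essential idea your proposal is missing.
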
 

\begin{proof}
Assume by contradiction that for an arbitrary large constant $K > 0$
\begin{equation}
w > K|x_{1} - x_{0}|^{2}. 
\end{equation}

As before we denote the distance between the points by $\rho = |x_{1} - x_{0}|$. We begin with a claim.

\begin{claim} $\exists \;  \textnormal{Half Ball} \; HB_{\rho}(x_{1}) \; \textnormal{such that} \; \forall x \in HB_{\rho}(x_{1}), \; w(x) \geq \frac{K}{2} \rho^{2}.$
\end{claim}

\begin{proof} 
We define $\varphi_{w} = \varphi_{u} - L_{x_{0}}$, where as before $L_{x_{0}} = \varphi_{u}(x_{0}) + \langle p, x-x_{0} \rangle$ for $p \in D^{+} \varphi_{u}(x_{0})$ the superdifferential of $\varphi_{u}$ at the point $x_{0}$. We make the following observations:
\\
1. $w \geq \varphi_{w} \; \; \; \forall x \in B_{2\rho}(x_{0}).$
\\
2. $\varphi_{w}(x_{1}) = \varphi_{u}(x_{1}) - \varphi_{u}(x_{0}) + \langle p, x-x_{1} \rangle - \langle p, x-x_{0} \rangle \; \; \; \forall x \in B_{2\rho}(x_{0}).$
\\
\\
In particular,
$$w(x) \geq \varphi_{w}(x_{1}) + \varphi_{u}(x) - \varphi_{u}(x_{1}) - \langle p, x - x_{1} \rangle.$$

Now consider $d \in D^{+} \varphi_{u}(x_{1})$ and observe that $w(x_{1}) = \varphi_{w}(x_{1})$. This produces the following inequality, 
$$w(x) \geq w(x_{1}) + \varphi_{u}(x) - \varphi_{u}(x_{1}) - \langle d, x - x_{1} \rangle - \langle p-d, x - x_{1} \rangle.$$

By semiconvexity on $B_{\rho}(x_{1})$, (9), and fixing $x \in HB_{\rho}(x_{1}) = \{ x \in B_{\rho}(x_{1}) \; | \; \langle p-d, x - x_{1} \rangle \leq 0 \},$ we have,
$$w(x) \geq K\rho^{2} - \bar{C} \rho^{2}.$$

We can choose $K$ large enough so that we obtain,
$$ w(x) \geq \frac{K}{2}\rho^{2}.$$

This is our desired half ball.
\end{proof}

We now consider again the dilated solution $v(y)$ from the previous lemma (8). By the weak Harnack Inequality for supersolutions, $\exists C > 0$ universal and $\epsilon > 0$ such that,
$$\int_{B_{4}(0)} |v(x)|^{\epsilon/2} \leq \left(C \inf_{B_{2}(0)}v(x)\right)^{\frac{2}{\epsilon}} \leq \left(Cv(0) \right)^{\frac{2}{\epsilon}} = C.$$

From our previous claim we obtain 
$$ 0 < |\{v(y) > \frac{K}{32\bar{C}} \} \cap B_{1}(y_{1})|.$$

Here $\bar{C}$ is our semiconvexity constant from before. We now have the following chain of inequalities,
\[
\begin{split}
 0 & < |\{v(y) > \frac{K}{32\bar{C}} \} \cap B_{1}(y_{1})|\; (\frac{K}{32\bar{C}})^{\epsilon /2} \\
 & = \int_{\{v(x) > \frac{K}{32\bar{C}} \} \cap B_{1}(y_{1})}  (\frac{K}{32\bar{C}})^{\epsilon/2} \\
 & \leq \int_{\{v(x) > \frac{K}{32\bar{C}} \} \cap B_{1}(y_{1})}   |v(x)|^{\epsilon /2} \\
 & \leq \int_{B_{4}(0)} |v(x)|^{\epsilon/2} \leq \left(C \inf_{B_{2}(0)}v(x) \right)^{\frac{2}{\epsilon}} \leq \left(Cv(0) \right)^{\frac{2}{\epsilon}} = C.
\end{split}
\]

For $K$ large enough we obtain a contradiction. Hence for a universal constant $K(n) > 0$, 
$$w(x_{1}) \leq K(n) |x_{1} - x_{0}|^{2}.$$ 
\end{proof}

\begin{remark}
Assume our obstacle is semiconvex on $B_{r}(x)$ with modulus of semiconvexity $\omega(r)$. We can translate our solution to the origin and scale by the modulus of semiconvexity of the obstacle. In particular, set $\rho$ to be the distance between our fixed points. 
\begin{equation}
v(y) =\frac{w(\rho y + x_{0})}{\omega(4\rho)}+1. 
\end{equation}

One can check that we get similar estimates in terms of the modulus of semiconvexity $\omega(\rho)$. 
\end{remark}

\begin{remark} A corollary of the previous lemma is that on the contact set $\{u = \varphi_{u} \}$ the obstacle $\varphi_{u}$ has a modulus of continuity $\omega(r)$. In particular,
\begin{equation} 
\|\varphi_{u}\|_{C_{loc}^{\omega(r)}(\{u = \varphi_{u} \})} \leq C.
\end{equation}
\end{remark}

We can now state and prove a sharp estimate for our solutions.

\begin{theorem}
Consider the boundary value problem (4) with semiconvex obstacle $\varphi_{u}$ admitting a modulus of semiconvexity, $\omega(r)$. Then the solution $u$ has modulus of continuity $\omega(r)$ up to $C^{1,1}(\Omega)$. In particuluar,
\begin{equation}
\|u\|_{C^{\omega(r)}(\Omega)} \leq C.
\end{equation}
\end{theorem}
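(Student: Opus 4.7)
The plan is to upgrade the contact-set information contained in Lemma 5 and Remark 4 into a pointwise $C^{1,\omega}$ affine approximation at every point of $\Omega$, and then combine this with interior Evans--Krylov estimates for $F(D^{2}u)=0$ on the non-coincidence set $\{u>\varphi_{u}\}$ to conclude the global bound.

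First I would establish the affine approximation at a contact point. Fix $x_{0}\in\{u=\varphi_{u}\}$ and let $L_{x_{0}}(y)=\varphi_{u}(x_{0})+\langle p,y-x_{0}\rangle$ with $p\in D^{+}\varphi_{u}(x_{0})$. The lower bound $u(y)\geq L_{x_{0}}(y)-\omega(|y-x_{0}|)$ is immediate from $u\geq\varphi_{u}$ together with the $\omega$-semiconvexity of $\varphi_{u}$. For the upper bound I split into cases: if $y$ is a contact point, Lemma 5 (rescaled to a general modulus as in Remark 3) gives $u(y)-L_{x_{0}}(y)\leq C\omega(|y-x_{0}|)$ directly; if $y$ is non-contact with nearest contact point $y_{0}$, Lemma 4 gives $u(y)-L_{y_{0}}(y)\leq C\omega(|y-y_{0}|)$, and the difference $L_{y_{0}}(y)-L_{x_{0}}(y)$ has to be controlled by $C\omega(|y-x_{0}|)$. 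I obtain this by applying Lemma 5 symmetrically at both $x_{0}$ and $y_{0}$ and combining with the semiconvex lower barrier to get a Lipschitz-type comparison of the superdifferential selections $p_{x_{0}}$ and $p_{y_{0}}$.

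Second I would extend the affine approximation to a non-contact point $x_{1}\in\{u>\varphi_{u}\}$. Let $d=\mathrm{dist}(x_{1},\{u=\varphi_{u}\})$ and let $x_{0}$ be its closest contact point, so $B_{d}(x_{1})\subset B_{2d}(x_{0})$. The first step produces $\sup_{B_{d}(x_{1})}|u-L_{x_{0}}|\leq C\omega(2d)$, while on $B_{d}(x_{1})$ we have $F(D^{2}(u-L_{x_{0}}))=F(D^{2}u)=0$. Using the convex/concave hypothesis on $F$, the rescaled interior Evans--Krylov estimate yields
\[
\|D^{2}u\|_{L^{\infty}(B_{d/2}(x_{1}))}\leq\frac{C}{d^{2}}\sup_{B_{d}(x_{1})}|u-L_{x_{0}}|\leq C\,\frac{\omega(d)}{d^{2}},
\]
which is uniformly bounded because $\omega(r)\leq\bar{C}r^{2}$. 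Freezing the Taylor expansion of $u$ at $x_{1}$ then produces an affine approximation $L_{x_{1}}$ with $|u(y)-L_{x_{1}}(y)|\leq C\omega(|y-x_{1}|)$ for $|y-x_{1}|\leq d/2$; on scales larger than $d/2$ I fall back on the estimate centered at $x_{0}$.

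Assembling these two pointwise decompositions along the contact/non-contact dichotomy yields the desired $\|u\|_{C^{\omega(r)}(\Omega)}\leq C$, and the general-$\omega$ statement follows by uniform use of the rescaling of Remark 3. The main obstacle I expect is precisely the comparison of linear parts at two nearby contact points, equivalent to showing that the restriction $\varphi_{u}|_{\{u=\varphi_{u}\}}$ is in fact $C^{1,\omega}$; this is where Lemma 5 has to be applied symmetrically at $x_{0}$ and $y_{0}$ and glued to the semiconvex lower barrier to rule out any jump in the chosen superdifferential across the contact set.
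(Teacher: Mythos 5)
Your overall architecture matches the paper's: use Lemmas 5 and 6 to control the oscillation of $u$ against the linear part of $\varphi_{u}$ at contact points, then use interior elliptic estimates inside $\{u>\varphi_{u}\}$, and combine through a contact/non-contact dichotomy. (The paper organizes this as three explicit cases and treats the far-from-contact subcase with Harnack plus a $C^{\omega(\rho)}$-normalized interior estimate; you instead phrase things as a pointwise affine-approximation property at every point plus Evans--Krylov.) So the route is essentially the paper's, not a genuinely different one.

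That said, there are two steps in your writeup that do not hold as stated. First, the Evans--Krylov step asserts
$\|D^{2}u\|_{L^{\infty}(B_{d/2}(x_{1}))}\leq C\,\omega(d)/d^{2}$ and then concludes this is ``uniformly bounded because $\omega(r)\leq\bar{C}r^{2}$.'' That inequality is the wrong way around: the point of admitting a general modulus of semiconvexity ``up to $C^{1,1}$'' is precisely that $\omega(r)$ may be \emph{larger} than $r^{2}$ (e.g. $r^{1+\alpha}$ or $r^{2}\log(1/r)$), in which case $\omega(d)/d^{2}$ diverges as $d\to 0$ and you obtain no uniform Hessian bound. Freezing the Taylor expansion at $x_{1}$ therefore does not give the pointwise $C^{1,\omega}$ decomposition you want. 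The paper avoids this trap by stating the interior estimate in the normalized Campanato-type form $\|w-w(x_{1})\|_{C^{\omega(\rho)}(B_{\rho/2}(x_{1}))}\leq K\,\omega(\rho)^{-1}\|w-w(x_{1})\|_{L^{\infty}(B_{\rho}(x_{1}))}$, which is scale-invariant with respect to the $\omega$-modulus rather than producing an $L^\infty$ Hessian bound.

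Second, and more substantively, you correctly identify that some control on the variation of the chosen affine parts $L_{x_{0}}$ vs.\ $L_{y_{0}}$ across nearby contact points is needed, but the resolution you sketch does not actually supply it. Applying the two-contact-point estimate (Lemma 6) symmetrically at $x_{0}$ and $y_{0}$ yields
$\bigl|\langle p_{x_{0}}-p_{y_{0}},\,x_{0}-y_{0}\rangle\bigr|\leq C\,\omega(|x_{0}-y_{0}|)$,
which controls only the component of $p_{x_{0}}-p_{y_{0}}$ along the segment joining the two points; it does not give the full Lipschitz bound $|p_{x_{0}}-p_{y_{0}}|\leq C\,\omega(|x_{0}-y_{0}|)/|x_{0}-y_{0}|$ unless you also test against a full cone of nearby contact directions, which is unavailable in general. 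Note that the comparison of linear parts can in fact be bypassed entirely: with $w=u-L_{x_{0}}$ (for $x_{0}$ a fixed contact point) and $\bar{x}_{1}$ the contact point nearest a given non-contact $x_{1}$, Lemma 6 bounds $w(\bar{x}_{1})$, the semiconvex obstacle bound gives a lower barrier for $w$ on a ball around $\bar{x}_{1}$, and then the weak Harnack inequality for the nonnegative supersolution $w+\textnormal{const}\cdot\omega(\cdot)$ combined with the interior Harnack inequality on the ball $B_{|x_{1}-\bar{x}_{1}|}(x_{1})$ (where $w$ is a solution) transfers the pointwise bound from $\bar{x}_{1}$ to $x_{1}$, exactly as in the proof of Lemma 5 but with the non-trivial value $w(\bar{x}_{1})$ in place of $w(\bar{x}_{1})=0$. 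Reorganizing your argument around this mechanism rather than around a gradient comparison of the affine parts removes the gap.
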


\begin{proof}
To prove this theorem we consider three distinct cases.
\\
\textbf{Case 1}: $x_{1} \in \{ u > \varphi_{u}\}$, $x_{0} \in \{ u = \varphi_{u}\}$. 
\\
Choose the closest point in the contact set to $x_{1}$ and call it $\bar{x}_{1}$. Then we apply Lemma 5 to obtain the correct oscillation estimate up to the free boundary. Then an application of Lemma 6 gives us the correct oscillation estimate between two contact points. Finally we use the triangle inequality to conclude. 
\\
\\
\textbf{Case 2}: $x_{1}$, $x_{0}$ $\in \{ u = \varphi_{u}\}$. 
\\
This is the content of Lemma 6.
\\
\\
\textbf{Case 3}: $x_{1}$, $x_{0}$ $\in \{ u > \varphi_{u}\}$. 
\\
We distinguish two different subcases.
\\
\emph{Case 3a}: $\max\{d(x_{1} , \{ u = \varphi_{u}\}), d(x_{0} , \{ u = \varphi_{u}\}) \} \geq 4|x_{1} - x_{2}|$. 
\\
Suppose $\max\{d(x_{1} , \{ u = \varphi_{u}\}), d(x_{2} , \{ u = \varphi_{u}\}) \} = \rho$. Without loss of generality we assume that the maximum distance is realized at the point $x_{1}$. We observe that $B_{|x_{1}-x_{0}|}(x_{1}) \subseteq B_{\frac{\rho}{2}}(x_{1})$, and we consider $w = u - L_{x_{1}}$, where $L_{x_{1}}$ denotes the linear part of the solution at $x_{1}$. By an application of the Harnack Inequality we obtain,
$$ \sup_{B_{\rho}(x_{1})} w \leq C \inf_{B_{\rho / 2}(x_{1})} w \leq Cw(x_{2}) \leq C\omega(\rho).$$ 

Moreover we also appeal to the interior estimates for solutions to our fully nonlinear convex or concave operator, $F(D^{2}u) = 0$, 
$$\|w - w(x_{1}) \|_{C^{\omega(\rho)}(B_{\frac{\rho}{2}}(x_{1}))} \leq \frac{K}{\omega(\rho)}\|w - w(x_{1}) \|_{L^{\infty}(B_{\rho}(x_{1}))}.$$  

Hence, 
$$\|w - w(x_{1}) \|_{C^{\omega(\rho)}(B_{\frac{\rho}{2}}(x_{1}))} \leq C.$$ 

\emph{Case3b}: $\max\{d(x_{1} , \{ u = \varphi_{u}\}), d(x_{2} , \{ u = \varphi_{u}\}) \} < 4|x_{1}-x_{2}|$ 
\\
In this case one considers $\rho_{1} = d(x_{1} , \{ u = \varphi_{u}\})$ and $\rho_{0} = d(x_{0} , \{ u = \varphi_{u}\})$. Let $\bar{x}_{1}$ be the closest contact point to $x_{1}$ and $\bar{x}_{0}$ the closest contact point to $x_{0}$. We can apply Lemma 5 to obtain the desired oscillation estimate for each point up to the free boundary. We then apply Lemma 6 to control the oscillation between two contact points. Finally we apply the triangle inequality to conclude.   
\end{proof}

\section{Application to Stochastic Impulse Control Theory}

In the previous section we obtained a general estimate for fully nonlinear obstacle problems admitting an obstacle with a general modulus of semi-convexity. In this section we would like to apply the estimate to a particular fully nonlinear obstacle problem arising in stochastic impulse control theory. The idea is to prove that the given obstacle $Mu(x)$ is semi-concave with modulus of semiconcavity $ \omega(r)$. The strategy of the proof will follow the ideas presented in (\cite{CF79a}). We point out that the existence and uniqueness of a continuous viscosity solution to the fully nonlinear stochastic impulse control problem follows from introducing the Pucci Extremal Operators (see chapter 2 in \cite{CC95}) and adapting the arguments in (\cite{I95}). What one also needs to do is use Evans Lemma iteratively on a sequence of solutions to the fully nonlinear obstacle problem with continuous obstacle (See Remark 1).  

\begin{theorem} Let  $\varphi(x)$ be $\omega(r)$ semi-concave, strictly positive, bounded, and decreasing in the positive cone $\xi \geq 0$. Then the Obstacle 
$$Mu(x) = \varphi(x) + \inf_{\stackrel{\xi \geq 0}{x + \xi \in \Omega}}u(x + \xi)$$ 
is semi-concave with modulus of semi-concavity $\omega(r)$.
\end{theorem}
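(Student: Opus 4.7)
The plan is to decompose $Mu(x) = \varphi(x) + h_u(x)$ with
$$h_u(x) := \inf_{\substack{\xi \geq 0 \\ x+\xi \in \bar{\Omega}}} u(x+\xi),$$
so that verifying the semi-concavity estimate
$$Mu(x_0+h) + Mu(x_0-h) - 2Mu(x_0) \leq C\,\omega(|h|)$$
reduces, via the $\omega$-semi-concavity of $\varphi$, to controlling the corresponding second difference of $h_u$.

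The key device is a compensated-shift argument. Fix a base point $x_0$ and let $\xi^{*}$ be an optimizer in the infimum defining $h_u(x_0)$, with target $y^{*}:=x_0+\xi^{*}\in\bar{\Omega}$. At the perturbed points $x_0\pm h$ I would test the infimum with the shifted controls $\xi^{*}\mp h$, which are designed to land at the same target $y^{*}$:
$$(x_0+h)+(\xi^{*}-h) \;=\; y^{*} \;=\; (x_0-h)+(\xi^{*}+h).$$
Whenever the shifted controls remain admissible, i.e.\ $\xi^{*}\pm h\geq 0$ (and $y^{*}\in\bar{\Omega}$), one obtains $h_u(x_0\pm h)\leq u(y^{*}) = h_u(x_0)$, so the $h_u$-part of the second difference is non-positive and the estimate closes using only $\varphi$'s semi-concavity.

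I would split the proof into two regimes. In the \emph{interior regime}, every component of $\xi^{*}$ exceeds $|h_i|$ and $y^{*}$ lies in the interior of $\bar{\Omega}$, so the compensated shifts are admissible directly and the argument is immediate. In the \emph{boundary regime}, either some component $\xi^{*}_i$ is too small or $y^{*}$ approaches $\partial\Omega$; here admissibility is restored by a non-negative correction $\eta\in\{\xi\geq 0\}$ of size comparable to $|h|$, and the cost is absorbed using the three structural hypotheses on $\varphi$: strict positivity of $\varphi$ keeps the optimizer non-degenerate, the fact that $\varphi$ is decreasing in the positive cone lets one trade a spatial shift in $x$ for a controlled change in $\varphi$, and boundedness together with the Lipschitz estimate from Section 2 and the first-order KKT condition $\partial_i u(y^{*})\geq 0$ for indices with $\xi^{*}_i=0$ controls the term $u(y^{*}+\eta)-u(y^{*})$. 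The $C^{2,\alpha}$ regularity of $\partial\Omega$ together with the boundary condition $u=0$ handles the portion where $y^{*}$ meets the boundary.

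The main obstacle is the boundary regime: the naive correction produces a contribution that is only linear in $|h|$, which has to be absorbed without destroying the $\omega$-modulus. The anticipated resolution, following the Caffarelli--Friedman strategy cited at the start of the section and the iterative scheme referenced in Remark 1, is to bootstrap along the penalization sequence producing $u$: at each iterate, $Mu^{(n)}$ is $\omega$-semi-concave by induction, Theorem 3 then promotes $u^{(n+1)}$ to have $\omega$-modulus of continuity up to $C^{1,1}$, and this improved regularity of $u^{(n+1)}$ is precisely what absorbs the correction at the next step. Passing to the limit yields the $\omega$-semi-concavity of $Mu$, as required.
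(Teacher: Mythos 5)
Your compensated-shift plan is a genuinely different route from the paper's argument, and it breaks down exactly at the step you flag as the main obstacle. When $\xi^*$ has a zero or small component, the shifted control $\xi^* - h$ is inadmissible; the correction $\eta$ of size $|h|$ then produces an error $u(y^* + \eta) - u(y^*)$ that a Lipschitz bound only controls by $C|h|$, which dominates $\omega(|h|)$ and destroys the modulus. The KKT-type condition $\partial_i u(y^*) \geq 0$ at active indices gives the wrong sign for your purposes (it bounds the increment from \emph{below}, whereas you need an upper bound), and the proposed bootstrap along the penalization sequence is not clearly non-circular: the quadratic control on $u^{(n+1)}$ near the optimizer target is precisely what an independent argument has to supply.

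The paper avoids the compensated shift altogether. It tests $Mu(x\pm h)$ with the \emph{same} control $\bar{\xi}$ that is optimal for $Mu(x)$ --- always admissible since $\bar{\xi}\geq 0$ and the target lies in the interior --- so the second difference of $Mu$ is bounded by $\omega(h)$ plus $u(y^* + h) + u(y^* - h) - 2u(y^*)$ with $y^* = x + \bar{\xi}$. The real work (Claims 2--4) is to show that the optimizer set $\Sigma_{x_0}$ lies uniformly a positive distance from both the contact set $\{u = Mu\}$ and $\partial\Omega$; this uses the strict positivity and cone-monotonicity of $\varphi$ plus the barrier hypothesis $\bar u < \inf_{\partial\Omega}\varphi$. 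Since $y^*$ therefore sits in the open set $\{u < Mu\}$, where $F(D^2u)=0$, interior Evans--Krylov regularity gives $u\in C^{1,1}$ near $y^*$, yielding $u(y^* + h) + u(y^* - h) - 2u(y^*) \leq c|h|^2$ directly. This removes the admissibility problem entirely and supplies the quadratic bound from the equation rather than from an iterative scheme; to repair your proof you would need an analogous argument that the optimizer is never at a corner of the cone constraint or at the boundary, which is exactly what those claims provide.
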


\begin{proof}
We consider two distinct cases: 
\\
1. $x_{0} \in \{u = Mu\}.$
\\
2. $x_{0} \in \{u < Mu\}.$
\\
\\
\textbf{Case 1}: Fix $x_{0} \in \{u = Mu \}.$ 
\\
The proof in this case is based on characterizing the set where the infimum of $u$ occurs and establishing that this set is uniformly contained in the non-contact region $\{u < Mu \}$. This is the content of the following claims. We define the following sets: 
\\
1. $\Sigma_{\geq x_{0}} = \{x_{0} + \xi \; : \; \xi \geq 0\}.$
\\
2. $\Sigma_{x_{0}} = \{\varphi(x_{0}) + u(x_{0} + \xi) = Mu(x_{0})\}.$
\\
\\
The following claim characterizes $\Sigma_{x_{0}}$ as the set of points where $u$ realizes its infimum.
\begin{claim} For every $y \in (\Sigma_{\geq x_{0}} \setminus \Sigma_{x_{0}})$ and for every $x \in \Sigma_{x_{0}}$, $u(x) \leq u(y).$
\end{claim}

\begin{proof}
Fix $\bar{x} \in \Sigma_{x_{0}}$. Suppose by contradiction that $\exists x_{1} \in \Sigma_{\geq x_{0}} \setminus \Sigma_{x_{0}}$ such that $u(x_{1}) < u(\bar{x})$. This implies the following chain of inequalities,
\\
\[
\begin{split}
 \varphi(x_{0}) + u(x_{1}) &  < \varphi(x_{0}) + u(\bar{x})\\
 & = Mu(x_{0}) = \varphi(x_{0}) + \inf_{\stackrel{\xi \geq 0}{x_{0} + \xi \in \Omega}}u(x_{0} + \xi).
\end{split}
\]
\\
In particular we obtain,
$$u(x_{1}) < \inf_{\stackrel{\xi \geq 0}{x_{0} + \xi \in \Omega}}u(x_{0} + \xi).$$
This is our desired contradiction. 
\end{proof}

We now prove that pointwise the elements of $\Sigma_{x_{0}}$ are contained in the non-contact region, $\{u < Mu\}$.
\begin{claim} Suppose the solution to the Boundary Value Problem $F(D^{2}\bar{u}) = 0$ satisfies 
$$\bar{u} < \inf_{\partial \Omega} \varphi.$$ 
Then $\forall x \in \Sigma_{x_{0}}$ it follows that $u(x) < Mu(x)$. 
\\
Moreover in a neighborhood $N_{1}$ of $x$ we have $u \in C^{1,1}(N_{1})$
\end{claim}

\begin{proof}
We observe that the first statement ensures that $\Sigma_{x_{0}} \cap (\partial \Omega) = \varnothing$. Suppose $x_{0} \in \Omega^{\circ}$, $x \in \partial \Omega$ and $x_{0} \leq x$. Then we observe,
\\
\[
\begin{split}
 Mu(x_{0}) & = u(x_{0})  \\
 & \leq \bar{u}(x_{0}) < \inf_{\partial \Omega} \varphi \leq \varphi(x) + u(x)  \leq \varphi(x_{0}) + u(x).
\end{split}
\]
\\
The last inequality follows because $\varphi(x)$ is monotonically decreasing in the cone. Hence in particular  $\Sigma_{x_{0}} \cap (\partial \Omega) = \varnothing$. 
\\
\\
Suppose now by contradiction that $\exists x \in \Sigma_{x_{0}}$ such that $u(x) = Mu(x).$ Then we have the following chain of inequalities,
\\
\[
\begin{split}
 u(x_{0}) & = Mu(x_{0}) \\
 & = \varphi(x_{0}) + u(x)\\
 & = \varphi(x_{0}) +  Mu(x) \geq \varphi(x_{0}) +  Mu(x_{0}) > Mu(x_{0})
\end{split}
\]
\\
The last inequality follows from the strict positivity of the function $\varphi$. But we observe that the inequality contradicts the obstacle constraint $u(x_{0}) \leq Mu(x_{0}).$ Hence we have reached our desired contradiction.
\\
\\
Finally the last statement of the claim follows from the continuity of $u$. The continuity of the solution implies that $\{u < Mu \}$ is an open set and thus in a small neighborhood $N_{1}$ of $x$, $u$ satisfies the equation, $F(D^{2}u) = 0.$ We can therefore apply interior regularity estimates to conclude. 
\end{proof}

We now strenghten the previous claim to obtain a uniform neighborhood of $\Sigma_{x_{0}}$ that is strictly contained in the non-contact region.
\begin{claim} $\exists \delta_{0} >0$ such that $d \left \{\{u = Mu\}, \Sigma_{x_{0}}\right \} > \delta_{0}$.
\end{claim}

\begin{proof}
Suppose by contradiction $\exists \{\delta_{k}\} \searrow 0$ and $\{x_{k}\} \subset \Sigma_{x_{0}}$, such that 
$$d(x_{k}, \{u = Mu\}) < \delta_{k}.$$
By definition, $x_{k} \in \Sigma_{x_{0}}$, implies
$$\varphi(x_{0}) + u(x_{k}) = Mu(x_{0}) \; \; \forall k.$$ 
By the continuity of $u(x)$ this implies in particular that $\varphi(x_{0}) + u(\bar{x}) = Mu(x_{0})$ for some $\bar{x} \in \{u = Mu\}$. On the other hand, $\varphi(x_{0}) + u(\bar{x}) = Mu(x_{0})$ implies $\bar{x} \in \Sigma_{x_{0}}$. Hence from the previous claim we obtain,
$$Mu(\bar{x}) = u(\bar{x}) < Mu(\bar{x}).$$
This is our desired contradiction.
\end{proof}

We now state and prove a claim which allows us to redefine the obstacle in the neighborhood of a contact point.
 
\begin{claim} For every $x, \bar{x} \in \Omega$, $\exists \delta >0$, such that if $|x -x_{0}| < \delta$, and $d(\bar{x}, \Sigma_{x_{0}}) >\delta$, then $u(x) < \varphi(x) + u(\bar{x})$. Moreover, if $x \in \{u = Mu \}$, then $\bar{x} \notin \Sigma_{x}$. 
\end{claim}

\begin{proof}
Suppose by contradiction that there exists a sequence of points $\{x_{k}\}$ and $\{\bar{x}_{k'}\}$ satisfying:
\\
1. $|x_{k}-x_{0}| = \delta_{k}.$
\\
2. $d(\bar{x}_{k'},\Sigma_{0}) > \delta_{k'} > 0$.
\\
3. $\{\delta_{k}\} \searrow 0$ and $\{\delta_{k'}\} \searrow 0$.
\\
4. $u(x_{k}) \geq \varphi(x_{k}) + u(\bar{x}_{k'}) \;$ $\forall k$ and $\forall k'$.
\\
\\
We observe that from the previous claim $\exists k_{0}, k_{0}'$, such that $\forall k \geq k_{0}$ we have the following chain of inequalities,
\\
\[
\begin{split}
 Mu(x_{0} + \delta_{k})  & \leq Mu(\bar{x}_{k_{0}'}) \\
 &  \leq \varphi(\bar{x}_{k_{0}'}) + u(\bar{x}_{k_{0}'}) \\
 & \leq \varphi(x_{0} + \delta_{k}) + u(\bar{x}_{k_{0}'}) \\
 & \leq u(x_{0} + \delta_{k}) \leq Mu(x_{0} + \delta_{k}).
\end{split}
\]
\\
Thus the above inequalities are all equalities. This implies $\forall k \geq k_{0}$, 
$$Mu(x_{0} + \delta_{k}) = \varphi(x_{0} + \delta_{k}) + u(\bar{x}_{k_{0}'}).$$ 
Letting $k \to \infty$ we obtain, 
$$ Mu(x_{0}) = \varphi(x_{0}) + u(\bar{x}_{k_{0}'}).$$
Which implies in particular that $\bar{x}_{k_{0}'} \in \Sigma_{x_{0}}$. This is our desired contradiction. 
\end{proof}

From the last claim we can redefine the obstacle for $V_{\delta} = \{ |x-x_{0}| < \delta\}$. In particular by taking $\delta$ sufficiently small $\exists N_{2}$ neighborhood of $\Sigma_{x_{0}}$ such that,
$$Mu(x) =  \varphi(x) + \inf_{\stackrel{\xi \geq 0}{x + \xi \in N_{2}}}u(x + \xi).$$ 
For an even smaller $\delta$, 
$$Mu(x) =  \varphi(x) + \inf_{\stackrel{\xi \geq 0}{x_{0} + \xi \in N_{3}}}u(x + \xi).$$ 
Where  $N_{3}$ is such that, 
$$V_{\delta} + N_{3} - x_{0} \subseteq N_{1}.$$   
Here $N_{1}$ is the neighborhood obtained in Claim 3. In particular for $x \in V_{\delta}$ and $\xi \in N_{3} - x_{0}$, we can bound the second incremental quotients.  
$$\delta^{2}u = u(x + h + \xi) + u(x - h + \xi) - 2u(x + \xi) \leq c|h|^{2}.$$  
Moreover we know that for some $x + \bar{\xi}$ in $N_{1}$, we have, 
$$ \inf_{\stackrel{\xi \geq 0}{x + \xi \in N_{1}}}u(x + \xi) = u(x + \bar{\xi}).$$ 
Now we consider the second incremental quotients of the obstacle $Mu(x).$ By the semi-concavity of $\varphi$ we obtain,
\\
\[
\begin{split}
 \delta^{2}Mu(x) & \leq \omega(h) + u(x + \bar{\xi} + h) + u(x + \bar{\xi}  - h) - 2u(x + \bar{\xi})\\
 & \leq \omega(h) + c|h|^{2}  \\
 &\leq C \omega(h).
\end{split}
\]
\\
Thus, in a neighborhood of a contact point, $Mu(x)$ is semi-concave with semi-concavity modulus $ \omega(h)$.
\\
\\
\textbf{Case 2}: Fix $x \in \{u < Mu\}$. We argue as before by considering the second incremental quotients of the obstacle, $\delta^{2}Mu(x)$. We observe that the infimum of $u$ in the positive cone, $\xi \geq 0$, must always be realized at a non-contact point. Suppose $ \exists \; x + \xi_{1} \in \{u = Mu \}$ satisfing,
$$ \inf_{\stackrel{\xi \geq 0}{x + \xi \in \Omega}}u(x + \xi) = u(x + \xi_{1}).$$
Then from \textbf{Case 1} there exists $\xi_{2} \in \Sigma_{x + \xi_{1}} \subset \{u < Mu \}$ such that,  
$$\inf_{\stackrel{\xi \geq 0}{x + \xi_{1} + \xi \in \Omega}}u(x + \xi_{1} + \xi) = u(x + \xi_{1} + \xi_{2}).$$
Since $\xi_{1} + \xi_{2} \geq 0$, we have found a positive vector admissiable to 
$$ \inf_{\stackrel{\xi \geq 0}{x + \xi \in \Omega}}u(x + \xi).$$ 
Furthermore, $u(x + \xi_{1} + \xi_{2}) \leq u(x + \xi_{1})$. Hence we conclude that for a fixed $x \in \{u < Mu \}$ and for some $x + \bar{\xi}$ in $\{u < Mu\}$,
$$ \inf_{\stackrel{\xi \geq 0}{x + \xi \in \Omega}}u(x + \xi) = u(x + \bar{\xi}).$$ 
Moreover from Claim 4 we know that $x + \bar{\xi}$ is a uniform positive distance away from the contact set $\{u = Mu \}$. Hence there exists a uniform neighborhood $N_{0}$ of points around $x + \bar{\xi}$ where $\{u < Mu\}$. In a smaller neighborhood $N_{1}$, $u \in C^{1, 1}(N_{1})$. In particular for $x + \xi \in N_{1}$, we can bound again the second incremental quotients,  
$$u(x + h + \xi) + u(x - h + \xi) - 2u(x + \xi) \leq c|h|^{2}$$ 
Using once more the semi-concavity estimate on $\varphi(x)$ and for some $x + \bar{\xi}$ in $N_{1}$ we find,
\\
\[
\begin{split}
 \delta^{2}Mu(x)  & \leq \omega(h) + u(x + \bar{\xi} + h) + u(x + \bar{\xi}  - h) - 2u(x + \bar{\xi})\\
 & \leq \omega(h) + c|h|^{2}  \\
 &\leq C \omega(h).
\end{split}
\]
\\
Thus, in a neighborhood of a non-contact point, $Mu(x)$ is semi-concave with semi-concavity modulus $\omega(h)$.
\end{proof}

We are now in position to apply the general estimate obtained in the previous section.
\begin{theorem}
Let u be the solution to the fully nonlinear stochastic impulse control problem
\\
\[
   \begin{cases}
        F(D^{2}u) \geq 0 & \forall x \in \Omega. \\
        u(x) \leq \textnormal{M}u(x) & \forall x \in \Omega.\\
        u = 0 & \forall x \in \partial \Omega.       
  \end{cases}
\]
\\
Let $\Omega \subset \mathbb{R}^{n}$ a bounded domain with a $C^{2,\alpha}$ boundary $\partial \Omega$, and  define the obstacle,
$$Mu(x) = \varphi(x) + \inf_{\stackrel{\xi \geq 0}{x + \xi \in \bar{\Omega}}}(u(x + \xi)).$$
Where $\varphi(x)$ is $\omega(r)$ semi-concave, strictly positive, bounded, and decreasing in the positive cone $\xi \geq 0$. Then, the solution $u$ has a modulus of continuity $\omega(r)$ up to $C^{1,1}(\Omega)$.
\end{theorem}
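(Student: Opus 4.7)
The plan is to combine Theorem 3 with Theorem 2 via a routine sign change. A continuous viscosity solution $u$ to the stochastic impulse control problem exists, by the iterative scheme based on the Pucci extremal operators and Evans' Lemma alluded to at the start of Section 4; I fix such a $u$ and from now on treat $\textnormal{M}u$ as an externally given function of $x$ rather than as a nonlocal object depending on $u$.

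First I would apply Theorem 3 to this $u$. The hypotheses required on $\varphi$ there---strictly positive, bounded, $\omega(r)$-semi-concave, and decreasing in the positive cone---are exactly the ones imposed in the present statement, so Theorem 3 yields that $\textnormal{M}u$ is semi-concave in $\Omega$ with modulus of semi-concavity $\omega(r)$, and with constants independent of the base point. This promotes the implicit nonlocal obstacle into a bona fide admissible obstacle for the framework developed in Section 3.

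Second, I would reconcile the sign conventions of (6) with those of (4) by the substitution $v=-u$ and $\tilde{F}(A):=-F(-A)$. A direct computation shows that $\tilde{F}$ is uniformly elliptic with the same constants $\lambda,\Lambda$ and is concave (respectively convex) whenever $F$ is convex (respectively concave), so $\tilde{F}$ falls within the class treated in Section 3. Under this substitution, $F(D^{2}u)\geq 0$ becomes $\tilde{F}(D^{2}v)\leq 0$; the constraint $u\leq \textnormal{M}u$ becomes $v\geq \varphi_{v}$ with $\varphi_{v}:=-\textnormal{M}u$, which is semi-convex with the same modulus $\omega(r)$ by the previous paragraph; and the boundary condition $v=0$ on $\partial\Omega$ is preserved. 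Theorem 2 now applies to $v$ and delivers
\begin{equation*}
\|v\|_{C^{\omega(r)}(\Omega)}\leq C,
\end{equation*}
which is equivalent to the desired estimate $\|u\|_{C^{\omega(r)}(\Omega)}\leq C$ with the $C^{1,1}$ cap.

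The main difficulty is upstream, inside the invocation of Theorem 3, specifically the hypothesis $\bar{u}<\inf_{\partial\Omega}\varphi$ used in Claim 2. In the present zero right-hand side setting $\bar{u}\equiv 0$ by the maximum principle, so this reduces to $0<\inf_{\partial\Omega}\varphi$, which follows directly from strict positivity of $\varphi$; the same fact also feeds into the proof that $\Sigma_{x_{0}}\cap\partial\Omega=\varnothing$. Once these are verified, the interior $C^{1,1}$ regularity in $\{u<\textnormal{M}u\}$ exploited in the proof of Theorem 3 is available thanks to the Evans--Krylov theory for convex/concave $\tilde{F}$, and the assembly outlined above is purely formal.
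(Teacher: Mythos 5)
Your proposal is correct and follows the same route as the paper's proof, which simply invokes the preceding semi-concavity theorem for $\textnormal{M}u$ and then applies the $C^{\omega(r)}$ estimate from Section~3 after the sign flip $\varphi_{u}=-\textnormal{M}u$. You spell out the substitution $v=-u$, $\tilde F(A)=-F(-A)$ more carefully than the paper does, and you additionally verify the auxiliary hypothesis $\bar u<\inf_{\partial\Omega}\varphi$ in the semi-concavity argument (which the paper leaves implicit); note only that your references to ``Theorem~3'' and ``Theorem~2'' are off by one from the paper's actual numbering (they are Theorem~4 and Theorem~3, respectively).
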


\begin{proof}
The previous theorem shows that $Mu(x)$ is $\omega(r)$ semi-concave. Set $\varphi_{u}(x) = -Mu(x)$ and note that $\varphi_{u}$ is $\omega(r)$ semi-convex. We apply the estimates from the previous section to conclude.
\end{proof}

Finally as in the classical case, assuming analytic data and $f(x) \leq f(x + \xi) \; \; \forall \xi \geq 0$, as well as concavity of $F(\cdot)$ in the hessian variable, it follows from an application of a nonlinear version of the Hopf Boundary Point Lemma \cite{BDl99} and the results of \cite{Lee98} that we obtain the following structural theorem for the free boundary, 
\begin{theorem}
Given the Fully Nonlinear Stochastic Impulse Control Problem
\begin{equation}
   \begin{cases}
        F(D^{2}u) \geq f & \forall x \in \Omega.\\
        u(x) \leq \textnormal{M}u(x) = 1 + \inf_{\stackrel{\xi \geq 0}{x + \xi \in \Omega}}u(x + \xi). & \forall x \in \Omega.\\
        u = 0 & \forall x \in \partial \Omega.        
  \end{cases}
\end{equation}

It follows that, $\partial \{u < \textnormal{M}u \} = \Gamma^{1}(u) \cup \Gamma^{2}(u)$ where,
\\
1. $\forall x_{0} \in \Gamma^{1}(u)$ satisfying a uniform thickness condition on the coincidence set $\{u = \textnormal{M}u \}$,  there exists some appropriate system of coordinates in which the coincidence set is a subgraph $\{x_{n} \leq g(x_{1}, \dots, x_{n-1}) \}$ in a neighborhood of $x_{0}$ and the function $g$ is analytic.
\\
2. $\Gamma^{2}(u) \subset \Sigma(u)$ where $\Sigma(u)$ is a finite collection of $C^{\infty}$ submanifolds.
\end{theorem}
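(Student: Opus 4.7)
The plan is to reduce the implicit quasi-variational inequality to a standard fully nonlinear obstacle problem whose obstacle is real-analytic, and then invoke the free boundary classification of Lee \cite{Lee98} together with the standard singular-set analysis. The starting observation is already available from Claim 4 in the previous section: for each $x_0 \in \{u = \textnormal{M}u\}$, the infimum in the definition of $\textnormal{M}u(x_0)$ is realized at points $\Sigma_{x_0}$ sitting a uniform distance $\delta_0$ away from the contact set. Hence locally near $x_0$ the obstacle admits the representation $\textnormal{M}u(x) = 1 + u(x + \bar{\xi}(x))$, with $x + \bar{\xi}(x)$ lying in the open non-intervention region where $u$ solves $F(D^2 u) = f$.

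In the non-intervention region, analyticity of $f$ together with concavity of $F$ yields that $u$ is real-analytic, by Evans--Krylov followed by the classical Morrey--Nirenberg analyticity upgrade for fully nonlinear equations with analytic right-hand side. It remains to promote the selection $x \mapsto \bar{\xi}(x)$ to an analytic map. This is precisely where the monotonicity hypothesis $f(x) \le f(x+\xi)$ and the nonlinear Hopf boundary point lemma \cite{BDl99} enter: together they rule out a degenerate first-order behavior of $\xi \mapsto u(x+\xi)$ along the cone, so the optimal jump $\bar{\xi}(x)$ is the unique nondegenerate critical point of an analytic function. The analytic implicit function theorem then delivers an analytic local parametrization, and composition gives that $\textnormal{M}u$ is real-analytic near $x_0$. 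Consequently $\varphi_u := -\textnormal{M}u$ is a real-analytic semiconvex obstacle for the original problem.

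With an analytic obstacle in hand, the structural theorem follows from the free boundary results in \cite{Lee98}. For a free boundary point $x_0 \in \Gamma^1(u)$ satisfying the Caffarelli uniform thickness condition on $\{u = \textnormal{M}u\}$, one applies the fully nonlinear analogue of Caffarelli's dichotomy of blow-ups: nondegeneracy plus thickness forces a unique halfspace blow-up, and an iteration argument in the analytic category upgrades the local representation of the coincidence set to a subgraph $\{x_n \le g(x_1, \dots, x_{n-1})\}$ with $g$ analytic. Points failing the thickness condition constitute $\Gamma^2(u)$; adapting the classification of singular points to the fully nonlinear setting as in \cite{Lee98} confines them to a finite union $\Sigma(u)$ of $C^\infty$ submanifolds.

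The main obstacle is the analyticity upgrade for $\textnormal{M}u$, and specifically the smoothness of the optimal-jump map $\bar{\xi}(x)$. Semiconcavity alone, as produced in the previous section, yields only a $C^{1,1}$ obstacle, which is insufficient to trigger the analytic free boundary theory. The strict monotonicity of $f$ along the cone together with the Hopf lemma is exactly what promotes the set $\Sigma_{x_0}$ of optimal jumps to a nondegenerate isolated critical set of $\xi \mapsto u(x+\xi)$, so that the analytic implicit function theorem applies; after this reduction, the remaining arguments are a careful transcription of Lee's analysis to the present implicit-obstacle setting.
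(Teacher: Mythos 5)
The paper's proof of this theorem is essentially a one-sentence reduction to earlier references: the classical implicit-constraint analysis of \cite{J15C}, the nonlinear Hopf boundary point lemma of \cite{BDl99}, and the fully nonlinear obstacle problem free boundary theory of \cite{Lee98}. Your proposal fleshes this out, but the central mechanism you invoke -- upgrading the optimal jump map $x \mapsto \bar{\xi}(x)$ to an analytic function via the analytic implicit function theorem, so that $\textnormal{M}u$ itself becomes real-analytic -- contains a genuine gap and does not match the argument the cited references carry out.

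The gap is the nondegeneracy assertion. Applying the analytic implicit function theorem to the first-order condition for the minimizer $\bar{\xi}(x)$ requires that $\bar{\xi}(x)$ be (i) a \emph{unique} minimizer, (ii) an \emph{interior} (unconstrained) critical point of $\xi \mapsto u(x+\xi)$, and (iii) a critical point at which the Hessian is invertible. None of these follows from the tools you cite. The nonlinear Hopf boundary point lemma controls the \emph{first-order} normal behavior of a function that vanishes at a boundary extremum; it says nothing about the invertibility of an interior Hessian. The hypothesis $f(x) \leq f(x+\xi)$ is not strict and so cannot "rule out degenerate first-order behavior" as claimed. Moreover the minimization is over the constrained cone $\{\xi \geq 0\} \cap (\Omega - x)$, so the minimizer need not be an interior critical point at all, and the set $\Sigma_{x_0}$ produced by Claims 1--4 of the paper is a priori a set, not a single point. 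Without these three ingredients, the implicit function theorem does not apply and analyticity of $\textnormal{M}u$ does not follow -- infimal envelopes of analytic families are, generically, only $C^{1,1}$.

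The intended argument avoids ever asserting analyticity of $\textnormal{M}u$. Following the Caffarelli--Friedman template adapted in \cite{J15C}, one \emph{fixes} an offset $\bar{\xi} \in \Sigma_{x_0} - x_0$ (using Claims 2--4 to know it lies uniformly inside the non-intervention region, where $u$ is analytic by Evans--Krylov plus the Morrey--Nirenberg upgrade) and considers the \emph{single} analytic comparison obstacle $\psi(x) := 1 + u(x+\bar{\xi})$. One then exploits the chain $u \leq \textnormal{M}u \leq \psi$ with equality at $x_0$, the monotonicity $f(x) \leq f(x+\bar{\xi})$ to make the difference $\psi - u$ a supersolution of the linearized operator in the non-intervention region, and the nonlinear Hopf lemma applied to $\psi - u$ at $x_0$ to obtain the nondegeneracy needed to enter the free boundary machinery of \cite{Lee98} for the obstacle problem with analytic obstacle $\psi$. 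Your proposal substitutes for this a smoothness claim about the selection $\bar{\xi}(x)$ that is both unnecessary and unestablished; to repair it you would need to either supply an independent argument for second-order nondegeneracy and uniqueness of the minimizer, or replace the IFT step with the fixed-offset comparison just described.
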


\begin{remark} We point out that the above theorem holds for the more general implicit constraint obstacle 
$$\textnormal{M}u = h(x) + \inf_{\stackrel{\xi \geq 0}{x + \xi \in \Omega}}u(x + \xi)$$
where the regularity of $\Gamma^{1}(u)$ corresponds to the regularity of $h(x)$.
\end{remark}

\section{Applications to a Penalized Problem}

In this section we study a penalized fully nonlinear obstacle problem. The goal is to obtain optimal uniform estimates in the penalizing paramter $\epsilon.$ For this section we fix the modulus of semiconvexity to be linear, i.e. $\omega(r) = cr^{2}$. We point out that the followng can be suitably modified for a general modulus of semiconvexity. The idea to obtain the optimal estimate is to use the interplay between semiconvexity of the obstacle and the superharmonicity of the equation as before.

\begin{lemma}  Consider the fully nonlinear penalized obstacle problem with obstacle $\varphi_{u}$, admitting a modulus of semiconvexity, $\omega(r) = Cr^{2}$ and a suitably defined class of penalizations $\beta_{\epsilon}$,
\begin{equation}
   \begin{cases}
        F(D^{2}u) = \beta_{\epsilon}(u-\varphi_{u}) & \Omega,\\
        u = 0 & \partial \Omega,\\
       \varphi_{u} < 0 & \partial \Omega.      
  \end{cases}
\end{equation}
Then the solution $u$ has a modulus of continuity $\omega(r)$ up to $C^{1,\alpha}(\Omega) \; \forall \alpha < 1$ independent of the penalizing parameter $\epsilon$.
\end{lemma}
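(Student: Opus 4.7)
My plan is to adapt the strategy of Section 3 to the penalized problem, with the key new ingredient being a uniform $L^\infty$ bound on the penalization term $\beta_\epsilon(u_\epsilon - \varphi_u)$. Once this bound is in hand, the regularity theory pulls back from Theorem 3 and from the standard interior $W^{2,p}$ theory for concave/convex fully nonlinear operators.

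First I would establish that $\|\beta_\epsilon(u_\epsilon-\varphi_u)\|_{L^\infty(\Omega)} \leq K$ uniformly in $\epsilon$. For a natural class of penalizations ($\beta_\epsilon \leq 0$, monotone non-decreasing, vanishing for non-negative arguments), the supremum of $|\beta_\epsilon(u_\epsilon - \varphi_u)|$ is attained where $u_\epsilon - \varphi_u$ is smallest. Since $\varphi_u < 0 = u_\epsilon$ on $\partial\Omega$, this minimum is interior; call it $x^*$. The semi-convex modulus $\omega(r)=Cr^2$ means $\varphi_u$ can be touched from below at $x^*$ by a quadratic of the form $L(x) - C|x-x^*|^2$ with $L$ affine, so $u_\epsilon$ is touched from below by $L(x) - C|x-x^*|^2 + (u_\epsilon - \varphi_u)(x^*)$. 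Testing the viscosity supersolution property against this quadratic gives $F(-2CI) \leq \beta_\epsilon((u_\epsilon-\varphi_u)(x^*))$, and uniform ellipticity converts $F(-2CI)$ into a universal lower bound $-K$. Monotonicity of $\beta_\epsilon$ then forces $|\beta_\epsilon| \leq K$ globally.

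Second, with this uniform $L^\infty$ bound on the right-hand side, I would reproduce the scaling argument of Lemma 5 and the oscillation control of Lemma 6 almost verbatim. Fix $x_1$ where $u_\epsilon$ is strictly above its approximate contact level and let $x_0$ be a nearest point of this approximate contact set; write $\rho = |x_1-x_0|$ and $w = u_\epsilon - L_{x_0}$. The rescaling
\[
v(y) = \frac{w(\rho y + x_0)}{16C\rho^2} + 1
\]
then satisfies a uniformly elliptic equation whose right-hand side is of order $K/(16C)$ on $B_4(0)$, which is $\epsilon$-independent. The weak Harnack / $L^\varepsilon$ estimate for inhomogeneous supersolutions applies with constants independent of $\epsilon$, and combined with semi-convexity of $\varphi_u$ this yields $w(x_1) \leq K'\rho^2$. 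The three-case analysis of Theorem 3 then propagates the $\omega(r)$-type oscillation estimate throughout $\Omega$ uniformly in $\epsilon$.

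Third, to upgrade the oscillation estimate to the stated $C^{1,\alpha}$ regularity uniformly in $\epsilon$, I invoke Caffarelli's interior $W^{2,p}$ estimates (Caffarelli--Cabr\'e, Ch.~7) for convex or concave fully nonlinear uniformly elliptic equations with $L^\infty$ right-hand side; since $\|\beta_\epsilon(u_\epsilon-\varphi_u)\|_\infty \leq K$ is $\epsilon$-uniform, we obtain $\|u_\epsilon\|_{W^{2,p}(\Omega')} \leq C(p,\Omega')$ for every finite $p$, and Morrey embedding produces $u_\epsilon \in C^{1,\alpha}_{loc}$ for all $\alpha < 1$ with $\epsilon$-independent norms. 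The hardest step is unquestionably the first: the uniform control of the penalization via the one-sided quadratic barrier is the only place where the semi-convex modulus of the obstacle and the ellipticity of $F$ must combine in a genuinely non-trivial way. A secondary (but inherent) limitation is the loss from the sharp $C^{1,1}$ estimate of the unpenalized problem to $C^{1,\alpha}$ in the penalized setting, since $W^{2,p}$ bounds for every finite $p$ do not upgrade to $W^{2,\infty}$ uniformly in $\epsilon$.
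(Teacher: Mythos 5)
Your core argument is essentially the paper's, and it is correct. The decisive step in both is the same: identify the interior minimum of $u_\epsilon - \varphi_u$, use the semi-convexity of $\varphi_u$ together with the ellipticity of $F$ to bound $F(D^2 u_\epsilon)$ from below at that point, and then use monotonicity of $\beta_\epsilon$ to propagate the bound. Where you differ is in execution: you run this as a one-sided quadratic barrier in the viscosity sense, whereas the paper mollifies the obstacle to $\varphi_u^\delta$ (so that $D^2\varphi_u^\delta$ is classical), truncates the penalization to $\beta_{\epsilon,N}$, produces classical solutions via Schauder's fixed point, and performs the second-derivative comparison at the minimum in the classical sense. Your viscosity touching version is a genuine simplification of the a priori bound, since it avoids the mollification machinery entirely; what it omits is the existence part of the paper's proof (the truncation/Schauder argument that produces the solution whose derivatives are being estimated), which you should at least flag. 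Your second step --- re-running Lemmas 5 and 6 with an ``approximate contact set'' --- is both superfluous and under-specified: in the penalized problem there is no constraint $u_\epsilon \geq \varphi_u$, so the set $\{u_\epsilon = \varphi_u\}$ plays no distinguished role, and you do not say what the approximate contact set is; but it costs you nothing, since your third step reaches the $C^{1,\alpha}$ conclusion directly from the uniform $L^\infty$ bound on $\beta_\epsilon$ via $W^{2,p}$ theory, exactly as the paper does. Your closing observation about the inherent loss from $C^{1,1}$ to $C^{1,\alpha}$ at this stage is correct and matches the structure of the paper, which recovers $C^{1,1}$ only in the subsequent theorem under the additional convexity hypothesis on $F$.
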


\begin{proof}
Let $\rho(x)$ be a function in $C^{\infty}(\mathbb{R}^{n})$ with support in the unit ball, such that $\rho \geq 0$ and $\int_{\mathbb{R}^{n}} \rho = 1.$ Define for any $\delta > 0$,
$$\rho_{\delta}(x) = \delta^{-n}\rho(\frac{x}{\delta}).$$

Consider the mollifier
$$J_{\delta} [\varphi_{u}](x) = \int_{\Omega} \rho_{\delta}(x-y) \varphi_{u}(y) \; dy.$$

Recall that $\varphi_{u}$ semi-convex with a linear modulus implies that for any $\xi \in C^{\infty}_{0}(\Omega_{0})$, $\xi \geq 0$, where $\Omega_{0} \subset \Omega$ is an open set, it holds that for any directional derivative, $\frac{\partial}{\partial \eta}$ and some constant $C > 0$ independent of $\delta$,
$$\int_{\Omega} \varphi_{u} \frac{\partial^{2} \xi}{\partial \eta^{2}} \geq -C.$$

Taking $\xi = \rho_{\delta}$, it follows that pointwise in $\Omega$,
$$\frac{\partial^{2} J_{\delta} [\varphi_{u}]}{\partial \eta^{2}} \geq -C.$$

We consider, $\varphi_{u}^{\delta} = J_{\delta} [\varphi_{u} + \frac{C}{2} |x|^{2}] -  \frac{C}{2} |x|^{2}.$ It follows that
$$|D \varphi_{u}^{\delta}| \leq C.$$
$$\frac{\partial^{2} \varphi_{u}^{\delta}}{\partial \eta^{2}} \geq -C.$$
$$\varphi_{u}^{\delta} \to \varphi_{u} \; \; \textnormal{uniformly in} \; \Omega \; \textnormal{as} \; \delta \to 0.$$

Define $\beta_{\epsilon}(t) \in C^{\infty}$ for $0 < \epsilon < 1$ and $C$ a constant independent of $\epsilon$, such that,
\\
1. $\beta'_{\epsilon}(t) > 0.$ 
\\
2. $\beta_{\epsilon}(t) \to 0 \; \; \textnormal{if} \; t > 0, \epsilon \to 0$.
\\
3. $\beta_{\epsilon}(t) \to -\infty \; \; \textnormal{if} \; t < 0, \epsilon \to 0$.
\\
4. $\beta_{\epsilon}(t) \leq C$
\\
5. $\beta_{\epsilon}''(t) \leq 0$.
\\
Consider the penalized problem,
\begin{equation}
   \begin{cases}
        F(D^{2}u) - \beta_{\epsilon}(u-\varphi_{u}^{\epsilon}) = 0 & \Omega,\\
        u = 0 & \partial \Omega.      
  \end{cases}
\end{equation}

Define for $N  > 0,$
$$\beta_{\epsilon , N}(t) = \max \{ \min \{\beta_{\epsilon}, N \}, -N \}.$$

Consider the problem,
\begin{equation}
   \begin{cases}
        F(D^{2}u) - \beta_{\epsilon, N}(u-\varphi_{u}^{\epsilon}) = 0 & \Omega,\\
        u = 0 & \partial \Omega.      
  \end{cases}
\end{equation}

It follows from $W^{2,p}$ theory for fully nonlinear equations that for each $v \in L^{p}(\Omega) \cap C^{0}(\bar{\Omega})$ $(1 < p < \infty)$, there exists a unique solution $w \in W^{2,p}(\Omega) \cap C^{0}(\bar{\Omega})$ solving,
\begin{equation}
   \begin{cases}
        F(D^{2}w) - \beta_{\epsilon, N}(v-\varphi_{u}^{\epsilon}) = 0 & \Omega,\\
        u = 0 & \partial \Omega,      
  \end{cases}
\end{equation}
and for $\bar{C}$ independent of $v$,
$$\|w\|_{W^{2,p}} \leq \bar{C}.$$

Define the solution map $T$ such that $Tv = w.$ Notice that $T$ maps $B_{\bar{C}}(0) \subset L^{p}(\Omega)$ into itself and is compact. Hence by Schauder's fixed-point theorem, it follows, that there exists $u$ such that $Tu = u$. In particular, we have found a solution to $(16)$. Moreover $\beta_{\epsilon, N}(u-\varphi_{u}^{\epsilon}) \in C^{0,\alpha}$. Hence by Evans-Krylov $\|u\|_{C^{2,\alpha}} \leq C(\epsilon).$ We now estimate $\zeta = \beta_{\epsilon, N}(u-\varphi_{u}^{\epsilon}).$ By definition we know that $\beta_{\epsilon, N}(u-\varphi_{u}^{\epsilon}) \leq C$ for a constant $C$ independent of $N, \epsilon.$ Let $x_{0}$ be the minimum point of $\zeta.$ Without loss of generality we assume,
$$\mu = \zeta(x_{0}), \; \; \mu \leq 0, \; \; \mu < \beta_{\epsilon}(0).$$

It follows that $x_{0} \notin \partial \Omega.$ If not, then,
$$\mu = \zeta(x_{0}) = \beta_{\epsilon, N}(-\varphi_{u}^{\epsilon}) \geq \beta_{\epsilon, N}(0) \geq \beta_{\epsilon}(0).$$
A contradiction. On the other hand if $x_{0} \in \Omega$, then $\beta_{\epsilon}'(t) \geq 0$ implies that,  $$\min_{\Omega}(u-\varphi_{u}^{\epsilon}) = u-\varphi_{u}^{\epsilon}(x_{0}) < 0.$$

Moreover it follows that $D^{2}(u-\varphi_{u}^{\epsilon})(x_{0}) \geq 0$. Hence $F(D^{2}(u-\varphi_{u}^{\epsilon})(x_{0})) \geq 0.$ By Ellipticity and the semiconvexity estimate it follows that,
\[
\begin{split}
\beta_{\epsilon, N}(u-\varphi_{u}^{\epsilon}) (x_{0}) & = F(D^{2}u_{\epsilon, N} - D^{2} \varphi_{u}^{\epsilon} + D^{2} \varphi_{u}^{\epsilon}) \\
             & \geq F(D^{2}u_{\epsilon, N} - D^{2} \varphi_{u}^{\epsilon}) +\lambda \| D^{2} (\varphi_{u}^{\epsilon})^{+} \| - \Lambda  \| D^{2} (\varphi_{u}^{\epsilon})^{-} \| \\
             & \geq -C. \\
\end{split}
\]

In particular, $|\beta_{\epsilon, N}(u-\varphi_{u}^{\epsilon}) | \leq C$ for a constant $C$ independent of $\epsilon$ and $N$. Furthermore $|F(D^{2}u)| \leq C$. It follows from elliptic estimates,
$$\|u\|_{W^{2,p}} \leq C.$$
Hence for $N$ large enough $u$ is a solution for the penalized problem (15). 
\end{proof}

We now prove the optimal estimate as before

\begin{theorem}  Consider the solution to the fully nonlinear penalized obstacle problem with obstacle $\varphi_{u}$, admitting a modulus of semiconvexity, $\omega(r) = Cr^{2}$ and a suitably defined class of penalizations $\beta_{\epsilon}$. Assume that $F(D^{2}u)$ is convex in the Hessian variable. Then the solution $u$ is $C^{1,1}$ independent of $\epsilon.$
\end{theorem}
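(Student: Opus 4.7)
The plan is a Bernstein-type argument exploiting convexity of $F$, concavity of $\beta_\epsilon$, and semi-convexity of $\varphi_u$. By the preceding lemma the regularized penalized solution is classical enough to be differentiated twice, and $|F(D^2u)| = |\beta_\epsilon(u-\varphi_u^\epsilon)| \le C$ with $C$ independent of $\epsilon$. First, I would fix an arbitrary unit direction $\eta$ and differentiate the equation $F(D^2u) = \beta_\epsilon(u-\varphi_u^\epsilon)$ twice in that direction. Convexity of $F$ makes the quadratic term $F_{ij,kl}(D^2u)\,\partial_\eta u_{ij}\,\partial_\eta u_{kl}\ge 0$, and the condition $\beta_\epsilon''\le 0$ makes $\beta_\epsilon''(u-\varphi_u^\epsilon)\bigl(\partial_\eta(u-\varphi_u^\epsilon)\bigr)^2\le 0$. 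Discarding both nonnegative contributions yields
\begin{equation*}
F_{ij}(D^2u)\,(u_{\eta\eta})_{ij} \;\le\; \beta'_\epsilon(u-\varphi_u^\epsilon)\bigl(u_{\eta\eta} - (\varphi_u^\epsilon)_{\eta\eta}\bigr).
\end{equation*}

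Since the mollification in the previous lemma preserves semi-convexity, $(\varphi_u^\epsilon)_{\eta\eta}\ge -C$ uniformly in $\epsilon$. Setting $v := -(u_{\eta\eta}+C)$, the above inequality rearranges to
\begin{equation*}
F_{ij}(D^2u)\, v_{ij} \;-\; \beta'_\epsilon(u-\varphi_u^\epsilon)\, v \;\ge\; 0.
\end{equation*}
Because $\beta'_\epsilon\ge 0$, this linear operator satisfies the maximum principle, so $\sup_\Omega v \le \sup_{\partial\Omega} v^+$. A uniform one-sided Hessian bound $u_{\eta\eta}\ge -\tilde C$ for every direction $\eta$ therefore reduces to a uniform $L^\infty$ bound on $D^2u$ restricted to $\partial\Omega$.

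For the boundary bound I would use the strict condition $\varphi_u<0$ on $\partial\Omega$ together with the uniform $C^{1,\alpha}$ bound from the previous lemma to find a boundary strip on which $u-\varphi_u^\epsilon\ge c_0>0$ uniformly in $\epsilon$, so that $\beta_\epsilon(u-\varphi_u^\epsilon)$ remains bounded there without any blow-up. On $\partial\Omega$ itself, the boundary condition $u=0$ and $C^{2,\alpha}$ regularity of $\partial\Omega$ determine the tangential-tangential entries of $D^2u$ from $\nabla u$ and the second fundamental form; differentiating the boundary condition once along a tangent recovers the mixed tangential-normal entries; and the normal-normal entry is then extracted from the equation $F(D^2u)=\beta_\epsilon(u-\varphi_u^\epsilon)$ together with uniform ellipticity. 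This gives the desired uniform bound for $v$ on $\partial\Omega$, hence the uniform lower Hessian bound $D^2u\ge -\tilde C\,I$.

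To finish, I would upgrade the one-sided bound to a two-sided bound using ellipticity alone. Writing $D^2u = A_+ - A_-$ with $A_\pm\ge 0$ and $\|A_-\|\le\tilde C$, uniform ellipticity gives $\lambda\|A_+\| \le F(D^2u) - F(-A_-) \le C + \Lambda\tilde C + |F(0)|$, and hence $\|D^2u\|\le C'$ uniformly in $\epsilon$. I expect the main obstacle to be the boundary step: the RHS $\beta_\epsilon(u-\varphi_u^\epsilon)$ is smooth only with constants that blow up in $\epsilon$, so one cannot simply invoke $C^{2,\alpha}$ up-to-boundary theory uniformly; instead one must combine the above algebraic identities with the uniform $L^\infty$ bound on the RHS in the boundary strip, or alternatively subtract a barrier $\psi$ dominating the boundary values and apply the maximum principle to $v-\psi$.
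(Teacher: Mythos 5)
Your proposal is correct and follows essentially the same Bernstein-type argument as the paper: differentiate the penalized equation twice in a fixed direction, use convexity of $F$ and concavity of $\beta_\epsilon$ to discard the sign-definite terms, reduce the interior one-sided Hessian bound to a boundary estimate where $\varphi_u<0$ on $\partial\Omega$ makes the penalization harmless, and finally pass from the one-sided to a two-sided Hessian bound via uniform ellipticity. The only cosmetic difference is that you recast the interior step as a maximum-principle inequality for $v=-(u_{\eta\eta}+C)$ where the paper argues directly at an interior minimum of $u_{\tau\tau}$; these are the same computation.
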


\begin{proof}
Consider the penalization problem
\begin{equation}
   \begin{cases}
        F(D^{2}u) = \beta_{\epsilon}(u-\varphi_{u}) & \Omega,\\
        u = 0 & \partial \Omega.\\
        \varphi_{u} < 0 & \partial \Omega.        
  \end{cases}
\end{equation}
We aim to bound $\inf u_{\tau \tau}$ from below.  The following computation continues to hold for viscosity solutions by using incremental quotients and recalling that second order incremental quotients are supersolutions of a convex equation. We fix a directional derivative $\tau$ and differentiate the penalization identity to obtain,
$$F_{ij,kl}(D^{2}u)(D_{ij}u_{\tau})(D_{kl}u_{\tau}) + F_{ij}(D^{2}u)(D_{ij}u_{\tau \tau}) =$$ 
$$\beta_{\epsilon}''(u - \varphi_{u})(u - \varphi_{u})_{\tau}^{2} + \beta_{\epsilon}'(u - \varphi_{u})(u - \varphi_{u})_{\tau \tau}.$$

By convexity of the operator and the structural conditions on the penalization family $\beta_{\epsilon}(t)$ it follows that
$$F_{ij}(D^{2}u))(D_{ij}u_{\tau \tau}) \leq \beta_{\epsilon}'(u - \varphi_{u})(u - \varphi_{u})_{\tau \tau}.$$

Suppose the minimum point of $u_{\tau \tau}$ is in the interior of the domain then, since $\beta'(t) > 0$, we find $(u - \varphi_{u})_{\tau \tau} \geq 0$. In particular, $u_{\tau \tau} \geq -C.$ Suppose now that the minimum point of $u_{\tau \tau}$ is realized on the boundary of the domain. We differentiate the equation with respect to $x_{\tau}$ for $\tau \in \{1, \dots, n-1\}$ and obtain,
$$F_{ij}(D^{2}u)D_{ij}u_{\tau} = \beta_{\epsilon}'(u - \varphi_{u})(u - \varphi_{u})_{\tau}.$$
Recall $\varphi_{u} < 0$ on $\partial \Omega$. Hence for a fixed $\epsilon_{0} > 0$ it follows that $\varphi_{u} \leq u + \epsilon_{0}$ in $\{x \in \bar{\Omega} \; | \; d(x, \partial \Omega) \leq \frac{\epsilon_{0}}{2} \}.$ Morevover by the uniform continuity of $u^{\epsilon} \to u$ on $\bar{\Omega}$, there exists a small $\epsilon_{1}$, such that $\varphi_{u} \leq u + \frac{\epsilon_{0}}{2}$, $|\beta'| < \epsilon_{0}$, and $|\beta''| < \epsilon_{0}$ in $\{x \in \bar{\Omega} \; | \; d(x, \partial \Omega) \leq \frac{\epsilon_{0}}{2} \}$ for $0 < \epsilon < \epsilon_{1}.$ Hence it follows from the boundary H\"{o}lder estimates for linear non-divergence form equations,
$$\|u_{\tau n} \|_{L^{\infty}(\partial B^{+}( \frac{\epsilon_{0}}{4}))} \leq C.$$

Moreover by uniform ellipticity we can use the equation to solve for $u_{nn}$ in terms of $\beta$ and $u_{kl}$ for $k \in (1, \dots, n-1)$ and $l \in (1, \dots , n).$ Hence we obtain after straightening the boundary,
$$\|D^{2} u\|_{L^{\infty}(\partial \Omega)} \leq C.$$ 

Hence it follows that the solution is semiconvex with a linear modulus. Moreover $F(D^{2}u) = \beta_{\epsilon}(u - \varphi_{u}) \leq 0$ after choosing a penalization satisfying $\beta_{\epsilon}(t) \leq 0$ for $t \geq 0$. Finally an application of Lemma 6 proves that $u$ has a uniform $C^{1,1}$ estimate. 
\end{proof}

\begin{remark} We point out that the above arguments give us a straightforward proof for $C^{1,1}$ estimates when the operator is convex. The previous section was based on $C^{1,\alpha}$ estimates for Fully Nonlinear equations hence did not have a restriction on the sign of the operator.
\end{remark}

\begin{remark} Previous computation and estimates can be generalized to Viscosity Solutions of convex operators (see \cite{CC95}). 
\end{remark}

Finally, as an application of the uniform estimates, we prove how the $C^{2,\alpha}$ estimate for the penalized problem decays in the penalizing parameter.

\begin{corollary} Consider the fully nonlinear penalized obstacle problem with obstacle $\varphi_{u}$, admitting a modulus of semi-convexity, $\omega(r) = Cr^{2}$ and a suitably defined class of penalizations $\beta_{\epsilon}$,
\begin{equation}
   \begin{cases}
        F(D^{2}u) = \beta_{\epsilon}(u-\varphi_{u}) & \Omega,\\
        u = 0 & \partial \Omega.\\
        \varphi_{u} < 0 & \partial \Omega.        
  \end{cases}
\end{equation}
Moreover assume that $F( \cdot)$ is convex in the hessian variable. Then for a constant $C$ independent of $\epsilon$,
$$\|u\|_{C^{2,\alpha}} \leq C\epsilon^{-\alpha}.$$
\end{corollary}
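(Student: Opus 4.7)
The plan is to bootstrap the uniform $C^{1,1}$ bound produced in the preceding theorem into a $C^{2,\alpha}$ estimate by feeding the penalized equation into the Evans-Krylov / Schauder theory for convex fully nonlinear operators, while carefully tracking how the H\"older seminorm of the right hand side $\beta_\epsilon(u-\varphi_u)$ degenerates in $\epsilon$. The observation that makes the estimate sharp is that although the Lipschitz constant of the right hand side blows up like $\epsilon^{-1}$, its $L^\infty$ norm remains uniformly bounded, so an interpolation between $L^\infty$ and $\mathrm{Lip}$ produces exactly the $\epsilon^{-\alpha}$ loss.

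In detail: first I would invoke the previous theorem to record $\|u\|_{C^{1,1}(\Omega)} \leq M$ with $M$ independent of $\epsilon$, and combine this with the fact that a semiconvex function with a linear modulus is Lipschitz to obtain $\|u-\varphi_u\|_{\mathrm{Lip}} \leq L$ uniformly in $\epsilon$. Second, under the natural scaling $\|\beta'_\epsilon\|_\infty \leq C\epsilon^{-1}$ (i.e.\ a fixed profile $\beta$ rescaled as $\beta_\epsilon(t)=\beta(t/\epsilon)$, which is consistent with all the structural axioms in Lemma 2), I would estimate
$$\|\beta_\epsilon(u-\varphi_u)\|_{\mathrm{Lip}} \leq \|\beta'_\epsilon\|_\infty \, \|u-\varphi_u\|_{\mathrm{Lip}} \leq \frac{CL}{\epsilon},$$
while at the same time $\|\beta_\epsilon(u-\varphi_u)\|_{L^\infty} \leq C$ uniformly in $\epsilon$ by property (4) of the penalization family (this is exactly the $L^\infty$ bound already extracted inside the proof of Theorem 4). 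Standard interpolation then yields
$$[\beta_\epsilon(u-\varphi_u)]_{C^\alpha} \leq C\,\|\beta_\epsilon(u-\varphi_u)\|_{L^\infty}^{1-\alpha}\,\|\beta_\epsilon(u-\varphi_u)\|_{\mathrm{Lip}}^{\alpha} \leq C\epsilon^{-\alpha}.$$
Finally, since $F$ is uniformly elliptic and convex in the Hessian variable and $F(D^2u)=\beta_\epsilon(u-\varphi_u)\in C^\alpha$, I would apply the Evans-Krylov theorem together with the Schauder estimates for convex fully nonlinear equations to conclude
$$\|u\|_{C^{2,\alpha}} \leq C\bigl(\|u\|_{L^\infty} + [\beta_\epsilon(u-\varphi_u)]_{C^\alpha}\bigr) \leq C\epsilon^{-\alpha}.$$

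The main obstacle I anticipate lies in Step 2, since the axioms listed for $\beta_\epsilon$ in Lemma 2 do not explicitly pin down the $\epsilon$-scaling of $\|\beta_\epsilon'\|_\infty$; one has to verify that the natural rescaling $\beta_\epsilon(\cdot)=\beta(\cdot/\epsilon)$ is admissible and that neither the uniform $C^{1,1}$ bound from Theorem 4 nor the interpolation constant depends on the specific profile chosen. A secondary technical point is promoting the interior Evans-Krylov estimate to a global one, which requires combining the interior Schauder bound with the $C^{2,\alpha}$ regularity of $\partial\Omega$ and the uniform boundary Hessian estimate $\|D^2u\|_{L^\infty(\partial\Omega)}\leq C$ already extracted in the proof of Theorem 4. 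Once these are in place, the $\epsilon^{-\alpha}$ decay rate falls out as the exact output of the $L^\infty$-$\mathrm{Lip}$ interpolation.
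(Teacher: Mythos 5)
Your approach is genuinely different from the paper's. The paper fixes the specific penalizer $\beta_{\epsilon}(t)=t/\epsilon^{2}$ on $\{t<0\}$, performs the rescaling $v^{\epsilon}(x)=\epsilon^{-2}u^{\epsilon}(\epsilon x)$ so that $D^{2}v^{\epsilon}(x)=D^{2}u^{\epsilon}(\epsilon x)$ and $v^{\epsilon}$ solves the $\epsilon$-independent equation $F(D^{2}v^{\epsilon})=\beta_{1}(v^{\epsilon})$, extracts a uniform $C^{2,\alpha}$ bound for $v^{\epsilon}$, and undoes the scaling to read off the factor $\epsilon^{-\alpha}$. You instead interpolate the right-hand side between its uniform $L^{\infty}$ bound and its Lipschitz bound and feed the result into Schauder/Evans--Krylov. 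Both are legitimate routes; the paper's scaling is cleaner because it never touches the modulus of $\beta_{\epsilon}(u-\varphi_{u})$ directly, whereas yours is more robust to the choice of penalizer once the Lipschitz bound is tracked correctly.

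There is, however, a concrete gap in your Step 2. For the paper's penalizer one has $\|\beta_{\epsilon}'\|_{\infty}=\epsilon^{-2}$, not $\epsilon^{-1}$, so the naive chain rule $\|\beta_{\epsilon}(u-\varphi_{u})\|_{\mathrm{Lip}}\leq \|\beta_{\epsilon}'\|_{\infty}\,\|u-\varphi_{u}\|_{\mathrm{Lip}}$ produces $\epsilon^{-2}$, and the interpolation then only gives $\epsilon^{-2\alpha}$, which is weaker than claimed. Replacing the penalizer by $\beta_{\epsilon}(t)=\beta(t/\epsilon)$ as you suggest does not fix this, because it changes the approximation scheme (then $u^{\epsilon}\geq \varphi_{u}-C\epsilon$ rather than $\varphi_{u}-C\epsilon^{2}$), so the parameter $\epsilon$ no longer has the same meaning as in the theorem you are citing. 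The correct repair inside your own framework is to sharpen the Lipschitz bound using the structure of the problem: from the uniform $L^{\infty}$ bound on $\beta_{\epsilon}(u-\varphi_{u})$ established in the proof of the preceding theorem you get $h:=u-\varphi_{u}\geq -C\epsilon^{2}$, and then, since $\|h\|_{C^{1,1}}\leq M$ uniformly (after the reduction $\varphi_{u}=0$, or using semiconcavity of $\varphi_{u}$), the quadratic-growth inequality $h(x_{0})-\tfrac{1}{2M}|\nabla h(x_{0})|^{2}\geq \inf h\geq -C\epsilon^{2}$ forces $|\nabla h|\leq C'\epsilon$ on $\{h<0\}$. Hence $|\nabla\beta_{\epsilon}(h)|\leq \epsilon^{-2}\cdot C'\epsilon = C'\epsilon^{-1}$ and your interpolation then yields the stated $\epsilon^{-\alpha}$. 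Without this refinement the argument as written does not close.
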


\begin{proof}
It is well known that the penalization problem converges to the obstacle problem independent of the choice of penalizing family. Hence we fix a penalizing family, 
\begin{equation}
 \beta_{\epsilon}(t) = \left\{
     \begin{array}{lr}
       \frac{t}{\epsilon^{2}} &  \; \; t < 0.\\
       0 & t \geq 0.
     \end{array}
   \right.
\end{equation}

Without loss of generality we fix $\varphi_{u} = 0$. The following argument can be modified for non-zero obstacle $\varphi_{u}$. We consider the scaled function,
$$v^{\epsilon}(x) = \frac{1}{\epsilon^{2}}u^{\epsilon}(\epsilon x).$$ 
We note that
$$F(D^{2}v^{\epsilon}) = F(D^{2} (\frac{1}{\epsilon^{2}}u^{\epsilon}(\epsilon x))) = F(D^{2} u^{\epsilon}(\epsilon x)) = \frac{1}{\epsilon^{2}}u^{\epsilon}(\epsilon x) = v^{\epsilon}(x).$$ 

Hence we obtain for a constant $C$ independent of $\epsilon$,
$$\|v^{\epsilon}\|_{C^{2, \alpha}} \leq C.$$

It follows,
\[
\begin{split}
|D^{2}u^{\epsilon}(x) - D^{2}u^{\epsilon}(y)| & = |\epsilon^{2}D^{2}v^{\epsilon}(\frac{x}{\epsilon}) - \epsilon^{2}D^{2}v^{\epsilon}(\frac{y}{\epsilon})|  \\
             & = |D^{2}v^{\epsilon}(\frac{x}{\epsilon}) - D^{2}v^{\epsilon}(\frac{y}{\epsilon})| \\
             & \leq C | \frac{x}{\epsilon} - \frac{y}{\epsilon}|^{\alpha} \\
             & \leq C\epsilon^{-\alpha} |x-y|^{\alpha}.
\end{split}
\]
\end{proof}

\end{document}